\newtheorem{thm}{Theorem} [section]
\newtheorem{cor}[thm]{Corollary}
\newtheorem{lem}[thm]{Lemma}
\theoremstyle{definition}
\newtheorem{defn}[thm]{Definition}
\theoremstyle{remark}
\newtheorem{rem}[thm]{Remark}
\newtheorem{con}[thm]{Conjecture}
\numberwithin{equation}{section}
\begin{document}
\title{Notes On a Borwein and Choi's conjecture of cyclotomic polynomials with coefficients $\pm1$}%
\author{Shaofang Hong \qquad and \qquad Wei Cao}%
\address{Mathematical College Sichuan University Chengdu, Sichuan 610064 P.R.China}%
\email{hongsf@263.net\qquad caowei433100@vip.sina.com}%
\keywords{Cyclotomic polynomials, Littlewood polynomials, E-transformation, Ramanujan sum}%
\begin{abstract}
Borwein and Choi  conjectured that a polynomial $P(x)$ with coefficients $\pm1$ of degree $N-1$ is cyclotomic iff
$$P(x)=\pm \Phi_{p_1}(\pm x)\Phi_{p_2}(\pm x^{p_1})\cdots \Phi_{p_r}(\pm x^{p_1p_2\cdots p_{r-1}})$$ where $N=p_1p_2\cdots p_{r}$ and
the $p_i$ are primes, not necessarily distinct. Here $\Phi_p(x):=(x^p-1)/(x-1)$ is the $p-$th cyclotomic
polynomial. In \cite{1}, they also proved the conjecture for $N$ odd or a power of 2. In this paper we introduce a
so-called $E-$transformation, by which we prove the conjecture for a wider variety of cases and present the key as
well as a new approach to investigate the conjecture.
\end{abstract} \maketitle
\section{Introduction}
For a polynomial $p(z)\in \mathbb{C}(z)$ and a positive $\alpha$, define the $L_\alpha$ norm of $p(z)$ as:
$$\|p\|_\alpha:=(\frac{1}{2\pi}\int_0^{2\pi}|p(e^{i\theta})|^\alpha\textup{d}\theta)^{1/\alpha}.$$\par
The polynomials with coefficients $\pm1$ are called Littlewood polynomials by Borwein and Choi in \cite{1}, since
Littlewood \cite{3} raised a number of questions concerning such set of polynomials. The $L_2$ norm of a
Littlewood polynomial with degree $n$ is equal to $\sqrt{n+1}$. One of the older of Littlewood's questions, which
is over fifty years and still remains unsolved, is the following conjecture:
\begin{con}
(Littlewood) There exist two positive constants $c_1$ and $c_2$ such that for any $n$ we can find  a Littlewood
polynomial $p_n$ with degree $n$ satisfying that $c_1\sqrt{n+1}\leq |p_n(z)| \leq c_2\sqrt{n+1}$ for all complex
$z$ with $|z|=1.$
\end{con}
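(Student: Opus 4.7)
The plan is to construct, for each $n$, an explicit Littlewood polynomial $p_n$ of degree $n$ whose modulus on the unit circle is trapped between two positive multiples of $\sqrt{n+1}$. The most promising starting point is the Rudin--Shapiro family $P_k,Q_k$ of degree $2^k-1$, defined recursively by $P_0=Q_0=1$, $P_{k+1}(z)=P_k(z)+z^{2^k}Q_k(z)$, and $Q_{k+1}(z)=P_k(z)-z^{2^k}Q_k(z)$. A simple induction on the identity $|P_{k+1}(z)|^2+|Q_{k+1}(z)|^2=2(|P_k(z)|^2+|Q_k(z)|^2)$ yields $|P_k(z)|\le\sqrt{2(n+1)}$ on $|z|=1$, settling the upper bound with $c_2=\sqrt{2}$ along the subsequence $n=2^k-1$. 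To extend to arbitrary $n$, I would write $n+1$ in binary and splice together rescaled Rudin--Shapiro blocks with carefully chosen signs, using Parseval-style orthogonality to control the cross terms.

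For the lower bound I would attempt two parallel routes. First, take one of Kahane's constructions of unimodular flat polynomials, whose coefficients have modulus $1$ but are not $\pm 1$, and round each coefficient to its nearest sign; the resulting error polynomial has coefficients of modulus at most $1$, and its sup norm on the circle could be estimated by standard character-sum or Salem--Zygmund techniques, in the hope that it is negligible compared to $\sqrt{n+1}$. Second, choose the signs uniformly at random and then derandomize via the method of conditional expectations, attempting to concentrate both $\max$ and $\min$ of $|p_n|$ near $\sqrt{n+1}$ with high probability.

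The main obstacle, and the reason this problem has resisted attack for over fifty years, is the lower bound. The identity $\|p_n\|_2=\sqrt{n+1}$ for any Littlewood polynomial together with Chebyshev-type interpolation gives upper bounds relatively cheaply, but a lower bound on $\min_{|z|=1}|p_n(z)|$ is fundamentally nonlinear: a Littlewood polynomial may vanish on $|z|=1$ --- indeed every cyclotomic Littlewood polynomial does, which is precisely what makes the Borwein--Choi classification treated in the main body of this paper nontrivial --- so any candidate family must be \emph{provably} zero-free on the unit circle with minimum modulus comparable to its $L_2$ norm. Numerical evidence indicates that the Rudin--Shapiro minima decay like $n^{1/4}$ rather than $n^{1/2}$, so rounding tricks and orthogonality alone will not suffice; a genuinely new construction, or a clever compactness argument pushing bounds of the form $c_1 n^{1/2-\eps}\le|p_n(z)|\le c_2 n^{1/2+\eps}$ to the limit $\eps\to 0$, appears to be required. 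I would not expect the $E$-transformation introduced later in this paper, whose natural home is the classification of cyclotomic Littlewood polynomials, to be of direct use for producing flat ones.
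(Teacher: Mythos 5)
You have set out to prove a statement that the paper itself records as an \emph{open problem}: Conjecture 1.1 is Littlewood's flatness conjecture, described in the introduction as ``over fifty years and still remains unsolved,'' and the paper contains no proof of it (its actual content concerns the quite different Borwein--Choi classification of cyclotomic Littlewood polynomials). So there is no paper proof to compare against, and your proposal --- by your own admission in the final paragraph --- is not a proof either: it establishes at best the upper bound, and only along the subsequence $n=2^k-1$. Even there the extension step is unsubstantiated: splicing rescaled Rudin--Shapiro blocks according to the binary expansion of $n+1$ keeps coefficients $\pm1$, but ``Parseval-style orthogonality'' controls only the $L_2$ norm of the cross terms, not their sup norm on $\abs{z}=1$, and sup-norm flatness is precisely what is at stake; the blocks $z^{m}P_k(z)$ are unimodular-weighted on the circle, so the triangle inequality gives a bound that degrades with the number of binary digits of $n$ unless a genuinely new cancellation argument is supplied.

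The two lower-bound routes fail for concrete reasons, not merely for lack of detail. Rounding a Kahane flat polynomial with unimodular coefficients $e^{i\theta_j}$ to the nearest sign produces an error polynomial whose coefficients have modulus typically of order $1$ (the distance from a random point of the unit circle to $\set{\pm1}$ is not small), and the Salem--Zygmund bound you invoke gives that error a sup norm of order $\sqrt{n\log n}$ --- \emph{larger} than the $\sqrt{n+1}$ main term, so the error is not negligible; it swamps the flatness entirely. The random-signs route is blocked at the start: for uniformly random $\pm1$ coefficients the minimum of $\abs{p_n}$ on the unit circle is not concentrated near $\sqrt{n+1}$; on the contrary, random Littlewood polynomials have many zeros within distance $O(1/n)$ of the unit circle, forcing $\min_{\abs{z}=1}\abs{p_n(z)}=o(\sqrt{n})$ with high probability, so the method of conditional expectations has no true concentration statement to derandomize. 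Your diagnosis that the lower bound is the fundamental obstruction is accurate and consistent with the paper's framing, but a diagnosis is not a proof: as submitted, the proposal proves neither inequality of the conjecture in full generality.
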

Many of the questions raised concern comparing the behavior of Littlewood polynomials in other norms to the $L_2$
norm, among which is the problem of minimizing the $L_4$ norm. In particular, can Littlewood polynomials of degree
$n$ have $L_4$ norm asymptotically close to $\sqrt{n+1}$?\par For a polynomial
$p(z)=a(z-\alpha_1)(z-\alpha_2)\cdots (z-\alpha_n)\in \mathbb{C}(z)$, its Mahler measure is defined as
$M(p)=|a|\prod_{\alpha_i\geq 1}|\alpha_i|$. Since
$$M(p)=\lim_{\alpha\rightarrow0}\|p\|_\alpha=\exp(\frac{1}{2\pi}\int_0^{2\pi}\log(|p(e^{i\theta})|)\textup{d}\theta)=:\|p\|_0$$
one would expect it to be closely related to the minimization problem for the $L_4$ norm above. The minimum
possible Mahler measure for a Littlewood polynomial is 1 and this is achieved by any monic polynomial in
$\mathbb{Z}(x)$ with all roots of modulus 1, which is called the cyclotomic polynomial.\par To characterize the
cyclotomic Littlewood polynomials, Borwein and Choi \cite{1} raised the following conjecture :
\begin{con}(Borwein and Choi)
A polynomial $P(x)$ with coefficients $\pm1$ of degree $N-1$ is cyclotomic iff
$$P(x)=\pm \Phi_{p_1}(\pm x)\Phi_{p_2}(\pm x^{p_1})\cdots \Phi_{p_r}(\pm x^{p_1p_2\cdots p_{r-1}})\leqno (1.1)$$ where $N=p_1p_2\cdots p_{r}$ and
the $p_i$ are primes, not necessarily distinct, and where $\Phi_p(x):=(x^p-1)/(x-1)$ is the $p-$th cyclotomic
polynomial.
\end{con}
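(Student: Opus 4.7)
The plan is to prove the non-trivial direction by induction on $N$, with the \emph{$E$-transformation} playing the role of the inductive step. The easy direction is a direct computation: each factor $\Phi_{p_i}(\pm x^{p_1 \cdots p_{i-1}})$ is cyclotomic by Kronecker's theorem, and in the expanded product every exponent $0 \le k \le N - 1$ is hit exactly once through the unique mixed-radix representation $k = \sum_{i=1}^{r} a_i\, p_1 \cdots p_{i-1}$ with $0 \le a_i < p_i$. The resulting coefficient is a product of the chosen signs and is therefore $\pm 1$.

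For the converse, suppose $P$ is a cyclotomic Littlewood polynomial of degree $N-1$. Kronecker's theorem gives $P = \pm \prod_j \Phi_{d_j}(x)^{e_j}$ with $\sum_j e_j \varphi(d_j) = N - 1$. The goal is to choose a prime $p \mid N$ and exhibit a factorization $P(x) = \pm \Phi_p(\pm x)\, Q(x^p)$ in which $Q$ is a cyclotomic Littlewood polynomial of degree $N/p - 1$; the inductive hypothesis applied to $Q$ then produces (1.1). The $E$-transformation should accomplish this extraction by studying the values $P(\omega^j x)$ for $\omega$ a primitive $p$-th root of unity: a Ramanujan-type combination of these $p$ values selects the coefficients of $P$ along each residue class modulo $p$, and combining the $\pm 1$ constraint with the vanishing of $P$ at whichever primitive $p$-th roots of unity appear among its zeros should force a block structure along arithmetic progressions of step $p$, which is precisely the factorization sought. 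The base cases $N$ odd and $N$ a power of $2$ are supplied by the results of \cite{1}, so only the mixed case needs to be addressed.

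The main obstacle will be selecting the prime $p \mid N$ for which the extraction succeeds. For $p = 2$ the factor $1 \pm x$ imposes a rigid alternating-sign constraint on consecutive coefficients of $P$ that is delicate to reconcile with an arbitrary cyclotomic decomposition; for odd primes $p \mid N$ the Ramanujan-sum bounds are more forgiving, which is presumably where cases beyond those settled in \cite{1} will be unlocked. The technical crux lies in (i) identifying classes of $N$ for which some admissible prime $p$ can always be located, and (ii) verifying that the $E$-transformed polynomial is itself Littlewood rather than merely integer-valued. This non-stability under generic linear combinations of the $P(\omega^j x)$ is the essential difficulty, and it is exactly the point where the authors' new machinery is designed to intervene, presumably by combining cyclotomic structure with a quantitative coefficient analysis that rules out unwanted cancellations.
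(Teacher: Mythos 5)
There is a genuine gap, and it is worth being blunt about its nature: the statement you were asked to prove is an \emph{open conjecture}, and the paper itself does not prove it --- it only establishes special cases and reduces the general case to a further conjecture. Your proposal, read as a proof, begs the question at its single substantive step. The plan of inducting on $N$ by extracting $P(x)=\pm\Phi_p(\pm x)\,Q(x^p)$ is essentially the known reduction: in the paper this is Theorem 3.9, where $K(P)=\{i\}$ forces the coefficient periodicity $a_{li}=a_{li+1}=\cdots=a_{li+i-1}$ and hence $P(x)=(1+x+\cdots+x^{i-1})P_2(x^i)$ with $P_2\in LC(N/i)$, to which induction applies. But the step you describe with ``should force a block structure along arithmetic progressions of step $p$'' is exactly the unproven core: one must show $S_k=-1$ for all $i\nmid k$ (equivalently $K(P)=\{i\}$), and no known argument does this for general $N$. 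The paper achieves it only under extra hypotheses --- $|K(P)|\le 1$ (Theorem 3.9), $|T(E)|\le 1$ (Corollary 3.11), and the cases of Corollary 3.12 ($P$ square-free; $N$ odd; $N=2^t$; $N=2p^l$; $N=2M$ with $M$ odd and $e(4d)=0$) --- and then explicitly isolates the remaining difficulty as Conjecture 4.3 (removing the ``if'' in Lemma 4.2). Your own hedges (``should accomplish,'' ``presumably,'' ``the main obstacle will be'') sit precisely on this unfilled hole, so the proposal is a research program, not a proof; moreover the program, even if completed for your extraction step, would not obviously terminate at the base cases you cite, since $N/p$ can again be of mixed type.

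A secondary but concrete error: your reading of the $E$-transformation does not match the paper's. It is not a Ramanujan-type averaging of the twists $P(\omega^j x)$ that selects coefficients along residue classes mod $p$, and it never extracts a factor $\Phi_p(\pm x)$. Rather, it is an operation \emph{inside} $OC(N)$ on the exponent vector $(e(d'),e(2d'),\ldots,e(2^{t'+1}d'))$ of the cyclotomic factorization $P(x)=\prod_{d\mid 2N}\Phi_d^{e(d)}(x)$: it shifts multiplicities among the factors $\Phi_{2^n d'}$ while preserving the degree identity of Lemma 3.1, and it is designed so that its effect on power sums is exactly computable, namely $S_k(E(P))=S_k(P)\pm 2^{t'+1}C_{d'}(k)$ when $v_2(k)=t'$ and $S_k(E(P))=S_k(P)$ otherwise. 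Its role is to connect $\mathbb{P}_N(x)$ to an arbitrary $P\in OC(N)$ (Theorem 3.3) and thereby control the set $T(E)$ and the values $S_k$ via Newton's formula and Ramanujan sums --- the Littlewood ($\pm 1$) condition enters through Lemma 3.7, not through any stability of coefficient extraction under linear combinations, so the ``non-stability'' difficulty you flag is not where the paper's machinery intervenes. Your direct computation for the easy direction (mixed-radix expansion of exponents) is fine and agrees in substance with the paper.
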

They \cite{1} proved two special cases when $N$ is odd or a power of 2. As an application of Conjecture 1.2,
Borwein, Choi and Ferguson \cite{2} proved that
\begin{thm} \textup{(} \cite{2} \textup{)}
If $$P(x)=\pm \Phi_{p_1}(\pm x)\Phi_{p_2}(\pm x^{p_1})\cdots \Phi_{p_r}(\pm x^{p_1p_2\cdots p_{r-1}})$$ where
$N=p_1p_2\cdots p_{r}$ and the $p_i$ are primes, then
\begin{eqnarray*}
\frac{\|P\|_4^4}{N^2} & \geq & \frac{\|\Phi_2(-x)\Phi_2(-x^2)\cdots\Phi_2(-x^{2^{r-1}})\|_4^4}{4^r}\\
                      &   =  &
                      \frac{(\frac{1}{2}+\frac{5}{34}\sqrt{17})(1+\sqrt{17})^r-(-\frac{1}{2}+\frac{5}{34}\sqrt{17})(1-\sqrt{17})^r}{4^r}.
\end{eqnarray*}
\end{thm}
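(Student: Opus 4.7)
For a real polynomial $R$, write $c_k(R) = \sum_n r_n r_{n-k}$ for its aperiodic autocorrelations; by Parseval $\|R\|_4^4 = \sum_k c_k(R)^2$. My plan is to introduce the auxiliary quantity
\[
\gamma(R) := \sum_k c_k(R)\, c_{k+1}(R) = \frac{1}{2\pi}\int_0^{2\pi} e^{-i\theta}\,|R(e^{i\theta})|^4\, d\theta,
\]
set up a two-dimensional linear recursion for the pair $(\|P\|_4^4, \gamma(P))$ as $P$ is built up factor by factor, and analyze its growth rate.

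Writing $P(x) = A(x)\,Q(x^{p_1})$ with $A = \Phi_{p_1}(\epsilon_1 x)$ (so $\deg A < p_1$) and $Q$ the analogous product of the remaining $r-1$ factors, the fact that each integer $n$ has at most two decompositions $n = s + \ell p_1$ with $|s| \leq p_1 - 1$ gives, by direct expansion,
\[
\|P\|_4^4 = \|A\|_4^4\,\|Q\|_4^4 + 2\Delta_A\,\gamma(Q), \qquad \gamma(P) = E_A\,\|Q\|_4^4 + F_A\,\gamma(Q),
\]
where $\Delta_A = \sum_{s=1}^{p_1-1}\alpha_s(A)\,\alpha_{p_1-s}(A)$ and $E_A, F_A$ are analogous explicit sums in the autocorrelations $\alpha_s(A) = \epsilon_1^{|s|}(p_1 - |s|)$ of $A$. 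Summing the closed forms yields $\|A\|_4^4 = p_1(2p_1^2+1)/3$, $\Delta_A = \epsilon_1^{p_1}\,p_1(p_1^2-1)/6$, together with similarly explicit $E_A, F_A$ whose form depends on the parity of $p_1$. Normalizing by $N^2$, the pair $(X_r, \Gamma_r) := (\|P\|_4^4/N^2,\, \gamma(P)/N^2)$ satisfies a $2\times 2$ linear recursion whose matrix $M(p_1,\epsilon_1)$ depends only on $(p_1, \epsilon_1)$.

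The characteristic polynomials then show that the eigenvalues of $M(p, \epsilon)$ are $p$ and $1/p$ when $p$ is odd (either sign of $\epsilon$); $2$ and $1/2$ when $(p, \epsilon) = (2, +1)$; and $(1\pm\sqrt{17})/4$ when $(p, \epsilon) = (2, -1)$. Since the Thue-Morse dominant eigenvalue $(1+\sqrt{17})/4 \approx 1.28$ is strictly smaller than the dominant eigenvalue in every other case, the Thue-Morse recursion is the slowest-growing. Solving it with initial data $(X_0, \Gamma_0) = (1, 0)$ via the ansatz $X_r = C_+\lambda_+^r + C_-\lambda_-^r$ produces $C_+ = \tfrac{1}{2} + \tfrac{5}{34}\sqrt{17}$ and $C_- = \tfrac{1}{2} - \tfrac{5}{34}\sqrt{17}$, matching the right-hand side of the theorem exactly.

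The main obstacle is upgrading this eigenvalue comparison to the pointwise inequality $X_r \geq X_r^{TM}$ for every allowable $P$. The na\"ive Cauchy-Schwarz bound $|\Gamma| \leq X$ is insufficient: for instance $\Phi_3(-x)$ alone has $\Gamma_1 = -16/9 < -1 = \Gamma_1^{TM}$, so a more negative intermediate $\Gamma$ could in principle push $X$ below the Thue-Morse trajectory on a subsequent $(2,-1)$ step. The technical heart of the argument is therefore to identify a suitable linear functional $\ell(X, \Gamma) = aX + b\Gamma$—essentially the dominant left eigenvector of $M(2,-1)$—such that every admissible $M(p, \epsilon)$ sends $\ell$ to at least $\lambda_+ \ell$, and to combine this with control on the subdominant mode to recover the pointwise bound on $X_r$.
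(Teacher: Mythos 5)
A preliminary remark: the paper does not prove this statement at all --- Theorem 1.3 is quoted from the Borwein--Choi--Ferguson paper \cite{2} with no argument reproduced --- so there is no internal proof to compare against, and your attempt has to stand on its own. On its own terms, the computational half is correct and I verified it: since $|A(e^{i\theta})|^4$ has Fourier support in $\{|u|\leq 2p_1-2\}$, only the modes $v\in\{-1,0,1\}$ of $|Q(e^{ip_1\theta})|^4$ survive in the integral, so the pair $(\|\cdot\|_4^4,\gamma)$ genuinely evolves autonomously under a $2\times2$ matrix; the closed forms $\|A\|_4^4=p(2p^2+1)/3$ and $\Delta_A=\epsilon^p p(p^2-1)/6$ check out; the unnormalized matrix for $(2,-1)$ is $\left(\begin{smallmatrix}6&2\\-4&-4\end{smallmatrix}\right)$ with eigenvalues $1\pm\sqrt{17}$, for $(2,+1)$ one gets $8,2$, and for $p=3$ (either sign) $\left(\begin{smallmatrix}19&\pm8\\ \pm16&11\end{smallmatrix}\right)$ with eigenvalues $27,3$, confirming your spectral table; and solving with $(X_0,\Gamma_0)=(1,0)$, $(X_1,\Gamma_1)=(6,-4)/4$ gives exactly $C_{\pm}=\frac12\pm\frac{5}{34}\sqrt{17}$, the right-hand side of the theorem.

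The gap is the inequality, which \emph{is} the theorem, and the mechanism you propose to close it is not merely unproven but false as stated. Take $\ell=(1,\frac{5-\sqrt{17}}{4})\approx(1,0.219)$, the dominant left eigenvector of the normalized $M(2,-1)$. Consider the reachable state $v=(\frac32,1)$ of $Q(x)=1+x$ (unnormalized $(6,4)$, divided by $N^2=4$), and apply the admissible step $M(3,-1)=\frac19\left(\begin{smallmatrix}19&-8\\-16&11\end{smallmatrix}\right)$, i.e.\ pass to $P(x)=\Phi_3(-x)\Phi_2(x^3)=1-x+x^2+x^3-x^4+x^5$. Direct computation of the correlations of $P$ gives unnormalized $(82,-52)$, i.e.\ $Mv=(\frac{41}{18},-\frac{13}{9})$, consistent with the recursion; but then $\ell(Mv)\approx 1.961$ while $\lambda_+\ell(v)\approx 1.281\times 1.719\approx 2.202$. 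So it is false that every admissible $M(p,\epsilon)$ sends $\ell$ to at least $\lambda_+\ell$ on the reachable cone: the functional loses ground at an admissible step. (The theorem's conclusion survives here only because $X$ itself jumped by a factor $\approx1.52>\lambda_+$; the slack sits in $X$, not in any fixed linear functional.) Hence stepwise multiplicative domination by a single left eigenvector cannot work; since the step matrices do not commute and $\Gamma$ can be large of either sign relative to the Thue--Morse trajectory (your own $\Phi_3(-x)$ example, and the one above), one needs an inductive hypothesis that controls $\Gamma$ both from above and below in a $(p,\epsilon)$-dependent way before the pointwise bound $X_r\geq X_r^{TM}$ can be extracted. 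That comparison argument is precisely the content of \cite{2}, and your write-up stops where it begins: what you have is a correct derivation of the equality on the right-hand side together with an acknowledged conjecture in place of the left-hand inequality.
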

This paper addresses the investigation of Conjecture 1.2. It presents a new approach that we call the
$E-$transformation. And by this approach, we prove that Conjecture 1.2 is true for a wider variety of cases and
give the key and the direction to further investigate the conjecture. The paper is organized as follows: Section 2
is preparations, including the notations that will be used and simple discussion on Conjecture 1.2. In Section 3,
we introduce the $E-$transformation and offer more cases for which Conjecture 1.2 is true. A concrete example is
taken in Section 4, and through observation and analysis we point out the direction under which Conjecture 1.2 may
be completely solved.

\section{Preparations}
Throughout the paper we always let $2\leq N=2^tM$ with $M$ odd and $2\leq i\leq N-1$. And for convenience, we
define some sets of polynomials in $\mathbb{Z}[x]$ as follows:
\begin{enumerate}
\item[ ]$C(N):=\{P(x)\in \mathbb{Z}[x]\mid \deg P=N-1\text{ and }P(x) \text{ is cyclotomic} \};$
\item[ ]$OC(N):=\{P(x)\in C(N)\mid \text{The coefficients of P(x) are odd} \};$
\item[ ]$LC(N):=\{P(x)\in C(N)\mid \text{The coefficients of P(x) are }\pm1 \};$
\item[ ]$LC(N,i):=\{\sum\nolimits_{n=1}^{N-1}a_nx^n\in LC(N)\mid a_0=a_1=\ldots=a_{i-1}=1,a_i=-1\}.$
\end{enumerate}
Clearly, $C(N)\supset OC(N)\supset LC(N) \supset LC(N,i)$. For every $N\geq2$, there is a special polynomial in
$LC(N): \mathbb{P}_N(x):=1+x+x^2+\cdots+x^{N-1}$. Obviously, Conjecture 1.2 is true for it since $
\mathbb{P}_N(x)=\Phi_{p_1}(x)\Phi_{p_2}(x^{p_1})\cdots \Phi_{p_r}(x^{p_1p_2\cdots p_{r-1}})$ where $N=p_1p_2\cdots
p_{r}$ with $p_i$ prime. The importance of $\mathbb{P}_N(x)$ lies in that all $P(x)\in LC(N)$ can be transformed
from $\mathbb{P}_N(x)$ through a so-called $E-$transformation, which will be proved in Section 3. For any
$\mathbb{P}_N(x)\neq P(x)=\sum_{n=1}^{N-1}a_nx^n\in LC(N)$, it is easy to show that there exists one and only one
polynomial in $\{P(x),P(-x),-P(x),-P(-x)\}$ that belongs to $LC(N,i)$ for some $2\leq i\leq N-1$.\par As usual,
for $z\in \mathbb{Z}^+$ and a prime $p$, let $v_p(z)$ denote the $p-$adic valuation of $z$, i.e. $p^{v_{p(z)}}|z$
but $p^{v_{p(z)}+1}\nmid z$. This notation is also valid for the ring $\mathbb{Z}(x)$.\par Suppose
$P(x)=\sum_{n=1}^{N-1}a_nx^n\in LC(N)$. Since $P(x)$ is cyclotomic, it can be written as the product of the
irreducible $d-$th cyclotomic polynomials $\Phi_d(x)$ where $d\geq1$, i.e. $P(x)=\prod_{d\geq1}\Phi_d^{e(d)}(x)$
where $e(d)=v_{\Phi_d(x)}(P(x))$. Borwein and Choi [1] have further proved that
$P(x)=\prod_{d|2N}\Phi_d^{e(d)}(x)$. Suppose all roots of $P(x)$ are $x_1,x_2,\ldots,x_{N-1}$, then we have
$P(x)=\prod_{n=1}^{N-1}(x-x_n)$. So there are three expressions of $P(x)$ and we will choose its suitable
expression according to circumstances.
\par Let $S_k(P)$ be the sum of the $k-$th powers of all the roots of
$P(x)$, i.e. $S_k(P)=\sum\nolimits_{n=1}^{N-1} x_n^k$ and $C_d(k)$ be the sum of the $k-$th powers of the
primitive $d-$th roots of unity (Ramanujan sum), i.e. $C_d(k)=\sum e^{2\pi i a k/d}$ where $a$ is over a
irreducible set of $d$. If no confusion, we simply denote $S_k(P)$ by $S_k$. It is easy to see that
$S_k=\sum_{d|2N}e(d)C_d(k)$.\par In what follows, when we use the notation $S_k$ it means $1\leq k\leq N-2$.
\begin{lem}
The Ramanujan sum $C_d(k)$ has the following properties:
\begin{enumerate}
\item[(a)]If $(d_1,d_2)=1$, then $C_{d_1d_2}(k)=C_{d_1}(k)C_{d_2}(k)$, i.e. $C_d(k)$ is a multiplicative
arithmetic function with respect to $d$.
\item[(b)]If $(d,k')=1$, then $C_{d}(kk')=C_{d}(k)$.
\item[(c)]Let $p$ be a prime and $n\geq1$, then
$$C_{p^n}(k)=\left\{
\begin{array}{ll} p^n-p^{n-1} & if\quad v_p(k)\geq n\\
-p^{n-1} & if\quad v_p(k)=n-1\\
0        & otherwise
\end{array}\right.$$
\item[(d)]Let $n\geq0$. Define $T_n(k):=C_1(k)+C_2(k)+\cdots+C_{2^n}(k)-C_{2^{n+1}}(k)$, then
$$T_n(k)=\left\{\begin{array}{ll}
2^{n+1} & if\quad v_2(k)=n\\ 0 & else\end{array}\right.$$
\end{enumerate}
\end{lem}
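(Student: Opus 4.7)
My plan is to prove the four properties directly from the definition
\[C_d(k) = \sum_{\substack{1 \le a \le d \\ (a,d)=1}} e^{2\pi i a k / d},\]
handling them in the order (b), (c), (a), (d) so that each step can invoke the earlier ones. Most of the work is direct computation; the only subtlety is the auxiliary factor that appears in the multiplicativity proof (a), which is precisely why having (b) already in hand is convenient.

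For (b), when $(d, k') = 1$ the map $a \mapsto a k' \pmod d$ is a bijection of the reduced residue system modulo $d$, so substituting $a \mapsto a k'$ in the definition of $C_d(k k')$ simply relabels the summands and gives $C_d(k k') = C_d(k)$. For (c), I would split the full geometric sum over $1 \le a \le p^n$ into the reduced residues and the multiples of $p$; the latter contributes $\sum_{b=1}^{p^{n-1}} e^{2\pi i b k / p^{n-1}}$ after setting $a = pb$. Each complete sum equals its modulus when that modulus divides $k$ and vanishes otherwise, producing the three cases by comparing $v_p(k)$ with $n$ and $n-1$.

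For (a), I would invoke the Chinese Remainder Theorem: when $(d_1, d_2) = 1$, reduced residues $a$ modulo $d_1 d_2$ correspond bijectively to pairs $(a_1, a_2)$ of reduced residues modulo $d_1$ and $d_2$. Lifting via CRT and substituting into the exponent, $2\pi i a k/(d_1 d_2)$ splits as $2\pi i (a_1 u k / d_1 + a_2 v k / d_2)$ for integers $u, v$ with $(u, d_1) = (v, d_2) = 1$, so the sum factors as $C_{d_1}(u k) \cdot C_{d_2}(v k)$; part (b) then collapses each factor to give $C_{d_1}(k) \cdot C_{d_2}(k)$.

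Finally, (d) follows from (c) with $p = 2$. Setting $m := v_2(k)$, part (c) gives $C_{2^j}(k) = 2^{j-1}$ for $1 \le j \le m$ (together with $C_1(k) = 1$), $C_{2^{m+1}}(k) = -2^m$, and $C_{2^j}(k) = 0$ for $j \ge m+2$. A three-way split on whether $m < n$, $m = n$, or $m > n$ reduces $T_n(k)$ to a short telescoping sum in each case: the first and third give $0$, while $m = n$ gives $2^{n+1}$. The main obstacle throughout is the bookkeeping in (a); once that is dealt with, (c) and (d) are purely mechanical case analyses.
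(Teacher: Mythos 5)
Your proposal is correct and takes essentially the same route as the paper: the paper declares (a)--(c) ``trivial by the definition'' (your reduced-residue bijection for (b), geometric-sum splitting for (c), and CRT-plus-(b) factorization for (a) are exactly the standard details being elided), and your part (d) is the same computation the paper does, since the paper evaluates $\sum_{j=0}^{n} C_{2^j}(k)$ in one stroke as the sum of $k$-th powers of the $2^n$-th roots of unity (equal to $2^n$ when $2^n \mid k$ and $0$ otherwise) and then subtracts $C_{2^{n+1}}(k)$ via (c), whereas you sum the same values termwise with a three-way case split. The difference is purely cosmetic bookkeeping, not a different idea.
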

\begin{proof}
(a),(b) and (c) are trivial by the definition of $C_d(k)$.\par For (d), since $\sum_{j=0}^nC_{2^n}(k)$ is the sum
of $k-$th powers of the roots of $\prod_{d|2^n}=x^{2^n}-1$, which equals $2^n$ for $2^n|k$ and zero else, by (b)
and (c) the results follows.
\end{proof}
Let $P(x)=\sum_{n=1}^{N-1}a_nx^n\in LC(N)$, we can characterize $P(x)$ from two aspects: its coefficients, which
are all $\pm1$, and its roots, which are all the primitive roots of unity. What is the relationship between both?
The famous Newton's formula can partially answer this question. Since $P(x)$ is cyclotomic, we have
$x^{N-1}P(1/x)=\pm P(x)$. Thus it follows from Newton's formula that (see \cite{1})
$$S_k+a_1S_{k-1}+\cdots+a_{k-1}S_1+ka_k=0\leqno (2.1)$$ Without loss of generality, suppose
 $P(x)=\sum_{n=1}^{N-1}a_nx^n\in LC(N,i)$, then
we have $a_0=a_1=\ldots=a_{i-1}=1$ and $a_i=-1$. Using (2.1) repeatedly, we will get $S_0=S_1=\ldots=S_{i-1}=-1$
and $S_i=2i-1$. Since $i\geq2$, $S_i$ is the first sum that is not equal to -1. In other words, we have
$i=min\{k:S_k\neq-1\}$. \par However, only Newton's formula is not enough since it is true for all Littlwood
polynomials. To further reveal the relationship between coefficients and roots of $P(x)$, we should sufficiently
utilize its cyclotomic characteristic.
\section{$E-$transformation and some special cases}
\begin{lem}\textup{(} \cite{1} \textup{)}
$P(x)\in OC(N)$ iff
$$P(x)=\prod_{d|M}\Phi_d^{e(d)}(x)\Phi_{2d}^{e(2d)}(x)\cdots \Phi_{2^{t+1}d}^{e(2^{t+1}d)}(x)$$
where $$e(d)+\sum\limits_{n=1}^{t+1}2^{n-1}e(2^nd)=\left\{\begin{array}{ll}2^t & if \quad d|M\quad  and \quad d>1\\
2^t-1 & if\quad d=1 \end{array}\right..$$
\end{lem}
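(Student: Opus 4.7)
The plan is to reduce modulo $2$ and then exploit unique factorization in $\mathbb{F}_2[x]$. By the Borwein--Choi fact recalled in Section~2, every $P(x)\in C(N)$ may be written as $P(x)=\prod_{d\mid 2N}\Phi_d^{e(d)}(x)$, and since the divisors of $2N=2^{t+1}M$ are exactly the products $2^n d$ with $d\mid M$ and $0\le n\le t+1$, the factorization form in the statement is automatic; the real content of the lemma is the exponent identity. Observing that a polynomial of degree $N-1$ has all odd coefficients if and only if
\[
P(x)\equiv 1+x+\cdots+x^{N-1}=\frac{x^N-1}{x-1}\pmod{2},
\]
both directions of the biconditional will follow by computing the two sides of this congruence explicitly and matching exponents.

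To handle the left-hand side, I would use the classical identities $\Phi_{2^n d}(x)=\Phi_{2d}(x^{2^{n-1}})$ for $n\ge 1$, $\Phi_{2d}(x)=\Phi_d(-x)$ for odd $d>1$, and $\Phi_{2^n}(x)=x^{2^{n-1}}+1$. Writing $\phi_d(x):=\Phi_d(x)\bmod 2$ and applying Frobenius in the form $f(x^{2^{n-1}})\equiv f(x)^{2^{n-1}}\pmod{2}$, these collapse to a uniform congruence $\Phi_{2^n d}(x)\equiv\phi_d(x)^{2^{n-1}}\pmod{2}$ for $n\ge 1$ and $d\mid M$, while $\Phi_d(x)\equiv\phi_d(x)\pmod{2}$ for $n=0$. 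Collecting exponents,
\[
P(x)\equiv\prod_{d\mid M}\phi_d(x)^{\,e(d)+\sum_{n=1}^{t+1}2^{n-1}e(2^n d)}\pmod{2}.
\]

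For the right-hand side, the oddness of $M$ makes $x^N-1\equiv(x^M-1)^{2^t}\pmod{2}$, so
\[
\frac{x^N-1}{x-1}\equiv\phi_1(x)^{2^t-1}\prod_{\substack{d\mid M\\ d>1}}\phi_d(x)^{2^t}\pmod{2}.
\]
I would then invoke unique factorization in $\mathbb{F}_2[x]$: since $\gcd(2,M)=1$, the polynomial $x^M-1$ is separable over $\mathbb{F}_2$, and distinct $\phi_d,\phi_{d'}$ with $d,d'\mid M$ share no common root in $\overline{\mathbb{F}_2}$ (their roots are the primitive $d$-th and $d'$-th roots of unity, disjoint when $d\ne d'$). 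Matching the exponent of each $\phi_d$ on the two sides is then equivalent to the asserted identity.

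The main delicate point I expect is the pairwise coprimality and squarefreeness of the $\phi_d$'s in $\mathbb{F}_2[x]$, which is what lets one pass from an equality of products to an equality of exponents; the rest should reduce to direct Frobenius manipulations. A quick check using $\varphi(2^n d)=2^{n-1}\varphi(d)$ for $n\ge 1$ also confirms that the exponent identity forces $\deg P=N-1$, so no separate degree bookkeeping is required.
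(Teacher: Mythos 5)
Your overall route is sound, and it is in fact the natural one: the paper itself gives no proof of this lemma (it is quoted from \cite{1}), and the argument in \cite{1} is precisely a reduction modulo $2$ of the kind you describe. Your main computations are correct: the congruence $P(x)\equiv 1+x+\cdots+x^{N-1}\pmod 2$ as the exact translation of ``all coefficients odd,'' the collapse $\Phi_{2^n d}(x)\equiv \phi_d(x)^{2^{n-1}}\pmod 2$ for $n\ge 1$ via $\Phi_{2^nd}(x)=\Phi_{2d}(x^{2^{n-1}})$, $\Phi_{2d}(x)=\Phi_d(-x)$ and Frobenius, the identity $x^N-1\equiv(x^M-1)^{2^t}\pmod 2$, the squarefreeness and pairwise coprimality of the $\phi_d$ (from separability of $x^M-1$ over $\mathbb{F}_2$), and the degree check $\sum_{d\mid M}\varphi(d)\bigl(e(d)+\sum_{n=1}^{t+1}2^{n-1}e(2^nd)\bigr)=2^tM-1$ for the converse direction. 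With these pieces, matching exponents under unique factorization in $\mathbb{F}_2[x]$ does deliver both implications.

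There is, however, one genuine misstep: you launch the forward direction from the claim that ``every $P(x)\in C(N)$ may be written as $P(x)=\prod_{d\mid 2N}\Phi_d^{e(d)}(x)$,'' calling the factorization form automatic. That claim is false for $C(N)$ at large --- e.g.\ $\Phi_9(x)=x^6+x^3+1\in C(7)$ while $9\nmid 14$ --- and the paper's Section~2 records the restriction $d\mid 2N$ only for $P(x)\in LC(N)$, i.e.\ as a consequence of odd coefficients, which is part of what this very lemma asserts; so as written you are assuming a piece of the conclusion. The repair costs nothing and stays inside your own framework: start from the weaker fact (Kronecker) that a cyclotomic $P$ is a product $\prod_{d\ge 1}\Phi_d^{e(d)}$ over \emph{all} $d\ge 1$, write each $d=2^nd'$ with $d'$ odd, and collect exponents $c(d')=e(d')+\sum_{n\ge 1}2^{n-1}e(2^nd')$ for every odd $d'$. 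Comparing with $\phi_1^{2^t-1}\prod_{1<d\mid M}\phi_d^{2^t}$ via unique factorization then gives $c(d')=0$ for odd $d'\nmid M$, which by nonnegativity forces $e(2^nd')=0$ for all $n$; and for $d'\mid M$ the bound $c(d')\le 2^t$ forces $e(2^nd')=0$ for $n\ge t+2$, since $2^{n-1}>2^t$ there. Thus the restriction to divisors of $2N=2^{t+1}M$ is an \emph{output} of your exponent comparison, not a permissible input, and once you phrase it that way the proof is complete.
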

From Lemma 3.1 above, it follows that $e(2^{t+1})=0$.
\par Let $P(x)\in C(N)$. If $e(d)=v_{\Phi_d(x)}(P(x))\leq1$ for any $d\geq1$, we say that $P(x)$ is square-free.
Suppose that $P(x)\in OC(N)$ is square-free, it is easy to show that for $1<d|M$ there are only two cases:
$$(e(d),e(2d),\ldots,e(2^{t}d),e(2^{t+1}d))=(1,1,\ldots,1,0)\text{\quad or \quad  }(0,0,\ldots,0,1). \leqno (3.1)$$
And for $d=1$, there are also only two cases:
$$(e(1),e(2),e(4),\ldots,e(2^{t}))=(0,1,1,\ldots,1)\text{\quad or \quad  }(1,0,1,\ldots,1). \leqno (3.2)$$
\par Now we define an important as well as interesting tranformation $E: OC(N)\rightarrow OC(N)$. Suppose
$P(x)=\prod\nolimits_{d|2N}\Phi_d^{e(d)}(x)\in OC(N)$. For any $0\leq t'\leq t$ and $d'|M$, if
$e(d')\pm1,e(2d')\pm1,\ldots,e(2^{t'}d')\pm1,e(2^{t'+1}d')\mp1\geq0$, define:
\begin{eqnarray*}
E(P|t',d') & = & \Phi_{d'}^{e(d')\pm1}(x)\Phi_{2d'}^{e(2d')\pm1}(x)
                \cdots\Phi_{2^{t'}d'}^{e(2^{t'}d')\pm1}(x)\Phi_{2^{t'+1}d'}^{e(2^{t'+1}d')\mp1}(x)\\
           &   &(\prod_{d|2N,d\neq d'}\Phi_d^{e(d)}(x))
\end{eqnarray*}
Since $\sum\nolimits_{n=0}^{t'-1}2^t=2^{t'}-1$, it follows that $E(P|t',d') \in OC(N)$ by Lemma 3.1. How does
$S_k(P)$ chang during the transformation? By Lemma 2.1 (a) and (d) we have \setcounter{equation}{2}
\begin{eqnarray}
S_k(E(P|t',d')) & = & S_k(P)+(\pm C_{d'}(k)\pm C_{2d'}(k)\pm\cdots\pm C_{2^{t'}d'}(k)\mp C_{2^{t'+1}d'}(k)) \nonumber\\
          & = & S_k(P)\pm(C_{1}(k)+C_{2}(k)+\cdots+C_{2^{t'}}(k)-C_{2^{t'+1}}(k)) C_{d'}(k) \nonumber\\
          & = & S_k(P)\pm T_{t'}(k)C_{d'}(k) \nonumber\\
          & = &  \begin{cases} S_k(P)\pm 2^{t'+1} C_{d'}(k) & if \quad v_2(k)=t'\\ S_k(P) & else \end{cases}
\end{eqnarray}
We call $(t',d')$ the parameters set of $E$. For convenience, we usually omit the parameters and simply denote
$E(P|t',d')$ by $E(P)$. Moreover, if we make other $E-$transformation of $E(P|t',d')$, e.g.
$E(E(P|t',d')|t'',d'')$, we still denote it by $E(P)$. For finite $E-$transformations of $P(x)$, let $G(E)$ denote
the set of all its parameters. To be precise, suppose that $n$ $E-$transformations have been made in all and
$(t_i,d_i)$ is the parameters set for $i-$th $E-$transformation where $1\leq i\leq n$, then we have
$G(E)=\{(t_i,d_i)\mid 1\leq i\leq n)\}$.
\par We will show that any $P(x)\in OC(N)$ can be transformed into $\mathbb{P}_N(x)$ through finite $E-$transformations,
i.e.,
$$E(P)=\mathbb{P}_N(x)=\frac{x^N-1}{x-1}=\prod_{1<d|N}\Phi_{d}(x).$$
It is equivalent to the following:
\begin{lem}
Suppose that $w=(w_0,w_1,\ldots, w_t,w_{t+1})$ with $w_n\geq0$ for $0\leq n\leq t+1$ satisfies that
$w_0+\sum_{n=1}^{t+1}2^{n-1}w_n=2^t\; (or:\;w_0+\sum_{n=1}^{t+1}2^{n-1}w_n=2^t-1)$. For any  $0\leq t'\leq t$, if
$w_0\pm1,w_1\pm1,\ldots, w_{t'}\pm1,w_{t'+1}\mp1\geq0$, define an $E-$ transformation of $w$ as
$E(w)=(w_0\pm1,w_1\pm1,\ldots, w_{t'}\pm1,w_{t'+1}\mp1,w_{t'+2},\ldots,w_{t+1})$. Then by finite
$E-$transformations on $w$, we can get $$E(w)=(1,1,\ldots,1,0)\;
(or:\;E(w)=(0,1,\ldots,1,0)).$$\end{lem}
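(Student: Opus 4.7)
The plan is to prove the following stronger connectivity statement by induction on $t$: for every $t \ge 0$ and every non-negative integer $I_0$, any two tuples $w, w' \in \mathbb{Z}_{\ge 0}^{t+2}$ with $w_0 + \sum_{n=1}^{t+1} 2^{n-1} w_n = w'_0 + \sum_{n=1}^{t+1} 2^{n-1} w'_n = I_0$ can be joined by a sequence of admissible $E$-transformations. A direct check shows that every $E$-transformation preserves this invariant, and both candidate targets $(1,1,\ldots,1,0)$ and $(0,1,\ldots,1,0)$ are valid tuples with invariants $2^t$ and $2^t - 1$. The lemma then follows as the special case in which one endpoint is the target.

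The base case $t = 0$ amounts to walking along $(w_0, w_1) \leftrightarrow (w_0 \mp 1, w_1 \pm 1)$ within the non-negative orthant of $\mathbb{Z}^2$, which is clearly connected on each level set $w_0 + w_1 = I_0$.

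For the inductive step, given $w, w'$ I would construct a path in three phases. First, I iterate the $E$-transformation with $t' = t$ that decrements $w_{t+1}$ and increments $w_0, \ldots, w_t$; this is admissible whenever $w_{t+1} \ge 1$, so after $w_{t+1}$ iterations I reach $\hat w := (w_0 + w_{t+1}, w_1 + w_{t+1}, \ldots, w_t + w_{t+1}, 0)$. Do the same with $w'$ to obtain $\hat w'$. Second, the $E$-transformations with $t' \in \{0, 1, \ldots, t-1\}$ never alter the last coordinate, and their action on the first $t+1$ coordinates coincides with the $E$-move system of the $(t-1)$-case with the same invariant $I_0$; by the inductive hypothesis the prefixes of $\hat w$ and $\hat w'$ are connected in $\mathbb{Z}_{\ge 0}^{t+1}$, and the resulting path lifts verbatim with final coordinate pinned at $0$. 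Third, I reverse the first-phase reduction applied to $w'$ to arrive at $w'$ itself, which is admissible because at each intermediate state the first $t + 1$ coordinates are all $\ge 1$.

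The main obstacle I anticipate is choosing the right form of the inductive hypothesis. The naive phrasing ``every tuple of invariant $2^t$ or $2^t - 1$ can reach the corresponding target'' does not recurse, since the subproblem's invariant $I_0$ generally matches neither $2^{t-1}$ nor $2^{t-1} - 1$. Strengthening to full connectivity between arbitrary endpoints of a common invariant is what makes the three-phase argument click; once the statement is framed this way, the only remaining work is the admissibility counts for each phase, which are routine.
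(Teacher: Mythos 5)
Your proposal is correct, but it takes a genuinely different route from the paper. The paper proves the lemma by an explicit normalization algorithm: after first balancing $w_0$ and $w_1$ via a parity argument, it repeatedly computes the least index $r$ with $w_r=0$ and the greatest index $R$ with $w_R\geq 2$ and applies one $E$-move chosen according to whether $r<R$ or $r>R$, until every entry is at most $1$, at which point the structural constraints (3.1)/(3.2) leave only two possible tuples, each a single move from the target. Your induction on $t$ replaces that loop wholesale, and your key observation --- that the naive statement does not recurse because the prefix invariant after draining $w_{t+1}$ is a general $I_0$ rather than $2^{t-1}$ or $2^{t-1}-1$, so one must strengthen to full connectivity of every level set $\{w\in\mathbb{Z}_{\geq0}^{t+2} \mid w_0+\sum_{n=1}^{t+1}2^{n-1}w_n=I_0\}$ --- is exactly right; the three phases (drain the top coordinate with $t'=t$ moves, connect the prefixes by the inductive hypothesis using moves with $t'\leq t-1$ that fix the pinned last coordinate, reverse the drain on the other endpoint) are each routinely admissible, as you check. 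Your route buys two things the paper's does not. First, termination is automatic from the induction, whereas the paper never argues that the loop in steps \textcircled{2}--\textcircled{3} halts; indeed, read literally, case b of step \textcircled{3} \emph{increments} $w_R$ and can oscillate (e.g.\ for $t=2$, invariant $2^t-1$: $(1,2,0,0)\rightarrow(0,3,0,0)\rightarrow(1,2,0,0)$), which suggests an off-by-one in the stated move (the intended move is presumably $t'=R$, sending $(1,2,0,0)$ directly to the target $(0,1,1,0)$); your argument is immune to this defect. Second, you prove strictly more --- mutual reachability of \emph{all} tuples on a common level set --- which subsumes the reversibility remark the paper needs afterwards to deduce Theorem 3.3, since each $E$-move is inverted by flipping its signs. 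One small point to make explicit in a final write-up: admissibility of a move requires only non-negativity of both endpoints, so every edge of your connecting path can be traversed in either direction; this symmetry is what converts ``joined by a sequence of admissible $E$-transformations'' into the directed sequence of transformations from $w$ to $(1,1,\ldots,1,0)$ (or $(0,1,\ldots,1,0)$) that the lemma literally asserts.
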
\begin{proof} Firstly, we define two functions of $w$:
$$r:=\left\{\begin{array}{cc} min\{n:w_n=0,0\leq n\leq t+1\} & if \quad \exists w_n=0\; 0\leq n\leq t+1\\ -1 & else\end{array}\right.;$$
$$R:=\left\{\begin{array}{cc} max\{n:w_n\geq2,0\leq n\leq t+1\} & if \quad \exists w_n\geq2\; 0\leq n\leq t+1\\ -1 & else\end{array}\right..$$
\par Now we can give an algorithm as follows: \par\textcircled{1} Since $w_0+\sum_{n=1}^{t+1}2^{n-1}w_n=2^t$, then $w_0+w_1$ must be even.
Repeat $E-$transformation on $w_0$ and $w_1$ until $w_0=w_1$.\par \quad (or: Since
$w_0+\sum_{n=1}^{t+1}2^{n-1}w_n=2^t-1$, then $w_0+w_1$ must be odd. Repeat $E-$transformation on $w_0$ and $w_1$
until $w_0=w_1-1$.)\par\textcircled{2} Compute $r$ and $R$. If $R=-1$, then go to \textcircled{4}; else continue
next step.\par \par\textcircled{3} a. If $r<R$, then
 $E(w)=(w_0+1,w_1+1,\ldots,w_{R-1}+1,w_R-1,w_{R+1},\ldots,w_{t+1}).$
\par  \quad  b. If $r>R$, then $E(w)=(w_0-1,w_1-1,\ldots,w_{R-1}-1,w_R+1,w_{R+1},\ldots,w_{t+1}).$ \par \quad Return \textcircled{2}.
\par \textcircled{4} $R=-1$ implies that $w_n\leq1$ for all $0\leq n\leq t+1$, by (3.1) we get\par
\quad case 1: $E(w)=(1,1,\ldots,1,0)$. \par \quad case 2: $E(w)=(0,0,\ldots,0,1)$. Do
$E((0,0,\ldots,0,1))=(1,1,\ldots,1,0)$.\par (or: $R=-1$ implies that $w_n\leq1$ for all $0\leq n\leq t+1$, by
(3.2) we get\par \quad case 1: $E(w)=(0,1,1,\ldots,1,0)$. \par \quad case 2: $E(w)=(1,0,1\ldots,1,0)$. Do
$E((1,0,1\ldots,1,0))=(0,1,1\ldots,1,0)$.)\par Thus by finite steps, the desired result follows.
\end{proof}
For example, let $d$ be odd, $(e(d),e(2d),e(4d),e(8d),e(16d))=(2,4,1,0,0)\rightarrow (3,3,1,0,0)\rightarrow
(2,2,2,0,0)\rightarrow (1,1,1,1,0)$. Clearly, the algorithm in Lemma 3.2 is reversible. So reversing the algorithm
and noting that $S_k(\mathbb{P}_N)=-1$, by (3.3) we have the following theorem:
\begin{thm}
For any $P(x)\in OC(N)$, by finite $E-$transformations we can get $$E(\mathbb{P}_N(x))=P(x).$$ And furthermore
$$S_k(P)=-1+\sum\nolimits_{(t',d')\in G(E)} T_{t'}(k)(\pm C_{d'}(k)).$$
\end{thm}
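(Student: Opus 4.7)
\medskip\noindent\textbf{Proof plan.} The plan is to derive the theorem as an essentially immediate consequence of Lemma 3.2 together with the $E$-transformation formula (3.3), after noting the initial value $S_k(\mathbb{P}_N)=-1$ for $1\le k\le N-2$.

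First, I would decompose any $P(x)\in OC(N)$ according to Lemma 3.1: for each odd divisor $d'\mid M$, collect the exponent block $w(d'):=(e(d'),e(2d'),\dots,e(2^{t+1}d'))$, which satisfies the linear relation of Lemma 3.1 ($=2^t$ for $d'>1$, and $=2^t-1$ for $d'=1$). The block for $\mathbb{P}_N(x)=\prod_{1<d\mid N}\Phi_d(x)$ is $(1,1,\dots,1,0)$ when $d'>1$ and $(0,1,1,\dots,1,0)$ when $d'=1$. Lemma 3.2 supplies, for each $d'$, a finite sequence of $E$-transformations carrying $w(d')$ to the corresponding $\mathbb{P}_N$-block. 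Since each $E$-transformation touches only one $d'$-slot, these block-level sequences may be concatenated across all $d'\mid M$ without interference.

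Next, I would observe that each individual $E$-step $(w_0,\dots,w_{t'+1})\mapsto (w_0\pm1,\dots,w_{t'}\pm1,w_{t'+1}\mp1)$ has as inverse the $E$-step with opposite sign choices, and this inverse is legal precisely because the forward step was legal (both require the resulting vector to have nonnegative entries, a symmetric condition). Hence reversing the concatenated sequence yields a finite sequence of $E$-transformations carrying $\mathbb{P}_N(x)$ to $P(x)$, which proves the first assertion $E(\mathbb{P}_N)=P$.

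Finally, I would iterate (3.3) along this reversed sequence. Starting from $\mathbb{P}_N(x)=1+x+\dots+x^{N-1}$, whose roots are the nontrivial $N$-th roots of unity, one has $S_k(\mathbb{P}_N)=\sum_{j=1}^{N-1}\zeta_N^{jk}=-1$ since $N\nmid k$ for $1\le k\le N-2$. Each $E$-transformation with parameter $(t',d')$ shifts $S_k$ by $\pm T_{t'}(k)C_{d'}(k)$, so induction on the length of $G(E)$ yields
$$S_k(P)=-1+\sum_{(t',d')\in G(E)} T_{t'}(k)(\pm C_{d'}(k)),$$
where the sign in each summand is the one selected at that particular step. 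The only real obstacle is bookkeeping: one must keep the $\pm$ signs in $G(E)$ consistent with those appearing in (3.3), and check that the reversal of the algorithm in Lemma 3.2 preserves legality at every step. Both are routine, since nonnegativity after a $+$-step is equivalent to nonnegativity after the inverse $-$-step, so no substantive verification is needed beyond that already contained in Lemmas 3.1 and 3.2.
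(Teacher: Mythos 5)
Your proposal is correct and follows essentially the same route as the paper: apply the block-wise algorithm of Lemma 3.2 (for each $d'\mid M$) to carry $P(x)$ to $\mathbb{P}_N(x)$, invoke the reversibility of each $E$-step, and then iterate the formula (3.3) starting from $S_k(\mathbb{P}_N)=-1$. The paper's own argument is exactly this, stated almost in one line (``the algorithm in Lemma 3.2 is reversible; so reversing the algorithm and noting that $S_k(\mathbb{P}_N)=-1$, by (3.3)\dots''), so your extra checks on the legality of inverse steps and the non-interference of the $d'$-blocks simply make its terse reasoning explicit.
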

In what follows we always suppose that $P(x)\in LC(N,i)$ and $E(\mathbb{P}_N(x))=P(x)$ and denote $S_k(P)$ by
$S_k$ again. What interests us are those sets $(t',d')\in G(E)$ such that $S_k\neq -1$ for some $1\leq k\leq N-2$.
Define $T(E):=\{t'|(t',d')\in G(E)\quad and \quad S_k\neq -1\text{ for some }1\leq k\leq N-2\}$. Obviously, for
any $t'\in T(E)$, we have $0\leq t'\leq t=v_2(N)$. Then $S_k$ can be written as
$$ S_k=-1+\sum\nolimits_{t'\in T(E)}T_{t'}(k)\sum\nolimits_{(t',d')\in G(E)}(\pm C_{d'}(k)).\leqno (3.4)$$
From (3.4), we can easily get the following results:
\begin{cor}
If $S_k\neq -1$, let $t'=v_2(k)$, then
\begin{enumerate}
\item[(a)]  $t'\in T(E)$.
\item[(b)]  $0\leq t'\leq t$.
\item[(c)]  $S_k=-1+2^{t'+1}\sum\nolimits_{(t',d')\in G(E)}\pm C_{d'}(k)$.
\item[(d)]  $2^{t'+1}|(S_k+1)$.
\end{enumerate}
\end{cor}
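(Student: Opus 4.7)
The plan is to read off all four claims essentially mechanically from equation (3.4), combined with the key vanishing property in Lemma 2.1(d), namely that $T_{t'}(k) = 2^{t'+1}$ when $v_2(k) = t'$ and $T_{t'}(k) = 0$ otherwise. The idea is that for a fixed $k$, the outer sum in (3.4) over $t' \in T(E)$ has at most one surviving term — the one with $t' = v_2(k)$ — so everything reduces to a single case analysis on whether this index lies in $T(E)$.

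First I would fix $k$ with $1 \le k \le N-2$, set $t' := v_2(k)$, and examine (3.4). Each index $t'' \in T(E)$ with $t'' \ne t'$ contributes $T_{t''}(k) = 0$ by Lemma 2.1(d), so the formula collapses to
\[
S_k = -1 + T_{t'}(k) \sum_{(t',d') \in G(E)} \pm C_{d'}(k),
\]
where the remaining sum runs only over those parameter pairs in $G(E)$ whose first coordinate equals $t'$. If $t' \notin T(E)$, then either there is no such pair or, by the definition of $T(E)$, the associated contribution never escapes $-1$; but cleanly, no $t''\in T(E)$ matches $v_2(k)$ and we get $S_k = -1$. Contrapositively, $S_k \ne -1$ forces $t' \in T(E)$, which is (a). Part (b) is then immediate from the observation made just before (3.4) that every element of $T(E)$ lies in $\{0,1,\ldots,t\}$.

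For (c), once (a) is established I substitute $T_{t'}(k) = 2^{t'+1}$ into the collapsed formula, yielding exactly
\[
S_k = -1 + 2^{t'+1} \sum_{(t',d') \in G(E)} \pm C_{d'}(k).
\]
Finally, (d) is a one-line consequence of (c): Ramanujan sums $C_{d'}(k)$ are integers (as sums of algebraic integers that are rational), so the bracketed sum is an integer, whence $2^{t'+1} \mid (S_k + 1)$.

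There is no genuine obstacle here; the corollary is a bookkeeping consequence of Theorem 3.3, the rewriting (3.4), and the indicator-like behavior of $T_{t'}(k)$ guaranteed by Lemma 2.1(d). The only thing to be careful about is the logical direction in (a): one must argue contrapositively that if $v_2(k)$ were not in $T(E)$, then every term in (3.4) would vanish and $S_k$ would equal $-1$, contradicting the hypothesis $S_k \ne -1$.
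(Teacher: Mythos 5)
Your proposal is correct and follows exactly the route the paper intends: the paper gives no separate proof of Corollary 3.4, stating only that it follows easily from (3.4), and your argument is precisely that derivation---collapse (3.4) via the indicator behavior of $T_{t'}(k)$ from Lemma 2.1(d), argue (a) contrapositively, read (b) off the remark preceding (3.4), substitute $T_{t'}(k)=2^{t'+1}$ for (c), and use integrality of the Ramanujan sums for (d). No gaps; this is the same bookkeeping the authors had in mind.
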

\begin{defn} (see \cite{5}) Let $S$ be a set contained in $\mathbb{N}$. We say that \(a\in S\) is a \textit{least-type divisor
in \(S\)}, if it can be deduced that \(c=a\) from \(c|a\) and \(c\in S\), that is, there is no other true divisor
of \(a\) in \(S\). Define $K(P)$ to be the set of least-type divisors in $\{k\mid 1\leq k\leq N-2 \; \text{and} \;
S_k\neq -1\}$.\end{defn}

\begin{cor}
If $k\in K(P)$ then $k|N$. In particular, we have $i\in K(P)$ and $i|N$.
\end{cor}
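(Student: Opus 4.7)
The plan is: for each $k \in K(P)$, I will construct $k_0$ dividing both $k$ and $N$ with $S_{k_0} = S_k$; the least-type hypothesis will then force $k_0 = k$, yielding $k \mid N$.

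Write $k = 2^{t'} k'$ with $k'$ odd. Since $S_k \ne -1$, Corollary~3.4(b) gives $t' \le t$, and Corollary~3.4(c) yields
$$S_k + 1 = 2^{t'+1} \sum_{(t', d') \in G(E)} (\pm C_{d'}(k)).$$
Every $d'$ appearing in $G(E)$ divides $M$, hence is odd, so Lemma~2.1(b) gives $C_{d'}(k) = C_{d'}(k')$. Moreover, Lemma~2.1(a)--(c) imply that $C_{d'}(n)$ depends on $n$ only through $\gcd(d', n)$; combined with $\gcd(d', k') = \gcd(d', \gcd(k', M))$, valid because $d' \mid M$, one obtains
$$C_{d'}(k) = C_{d'}(\gcd(k', M)).$$

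Now set $k_0 := 2^{t'} \gcd(k', M)$. Then $k_0 \mid k$ (since $\gcd(k', M) \mid k'$), $k_0 \mid N$ (since $2^{t'} \mid 2^t$ and $\gcd(k', M) \mid M$), $v_2(k_0) = t'$ (since $\gcd(k',M)$ is odd), and $1 \le k_0 \le k \le N-2$. Applying Theorem~3.3 to $k_0$ (its formula holds unconditionally) together with the identity above produces
$$S_{k_0} + 1 = 2^{t'+1} \sum_{(t', d') \in G(E)} (\pm C_{d'}(k_0)) = 2^{t'+1} \sum_{(t', d') \in G(E)} (\pm C_{d'}(k)) = S_k + 1,$$
so $S_{k_0} = S_k \ne -1$. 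Hence $k_0$ lies in $\{m : 1 \le m \le N-2,\ S_m \ne -1\}$ and divides $k$; by the least-type hypothesis on $k$, we conclude $k_0 = k$, and therefore $k \mid N$.

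For the second assertion, $i = \min\{k : S_k \ne -1\}$ is automatically a least-type divisor in this set, because any strict divisor of $i$ is strictly smaller than $i$ and hence not in the set; so $i \in K(P)$, and the first part gives $i \mid N$. The one genuinely substantive step is the Ramanujan-sum reduction $C_{d'}(k) = C_{d'}(\gcd(k', M))$ for odd $d' \mid M$; everything else is bookkeeping around $v_2$ and $\gcd$.
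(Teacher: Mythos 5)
Your proof is correct. Each step checks out: every parameter $d'$ in $G(E)$ satisfies $d'\mid M$ by the very definition of the $E$-transformation, hence is odd; the fact that $C_{d'}(n)$ depends on $n$ only through $\gcd(d',n)$ does follow from Lemma 2.1(a),(c), since $C_{p^a}(n)$ is determined by $\min(v_p(n),a)=v_p(\gcd(p^a,n))$; the identity $\gcd(d',k')=\gcd(d',\gcd(k',M))$ holds because $d'\mid M$; and applying Theorem 3.3 (which is indeed unconditional) to $k_0=2^{t'}\gcd(k',M)$, where $v_2(k_0)=t'$ makes Lemma 2.1(d) kill all terms with $t''\neq t'$, gives $S_{k_0}=S_k\neq-1$, after which least-type forces $k_0=k$ and so $k\mid N$. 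Your route differs from the paper's in packaging, though the underlying mechanism (insensitivity of Ramanujan sums to the part of $k$ beyond its gcd with the moduli) is the same. The paper argues by contradiction, one prime at a time: for an odd prime $p$ with $v_p(k)>v_p(N)$, or with $p\nmid N$, it shows $C_d(k)=C_d(k/p)$ for every $d\mid 2N$ via Lemma 2.1(b),(c) applied to the expansion $S_k=\sum_{d\mid 2N}e(d)C_d(k)$ coming from Lemma 3.1, so that $S_{k/p}=S_k\neq-1$ contradicts least-type; it never passes through Theorem 3.3. You instead construct the correct divisor in one stroke and work entirely inside the $E$-transformation formalism of Theorem 3.3 and Corollary 3.4(c). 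Your version buys a cleaner, descent-free argument and even exhibits explicitly what $k$ must be, namely $k=2^{v_2(k)}\gcd(k',M)$ --- essentially H\"older's principle for Ramanujan sums; the paper's version buys economy of tools, using only the raw parts (b),(c) of Lemma 2.1 against the divisor expansion, at the cost of a two-case analysis and an iterated reduction $k\to k/p$. The handling of the second assertion ($i\in K(P)$ because no proper divisor of the minimum can lie in the set) matches the paper's.
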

\begin{proof}
Suppose $k\in K(P)$. Let $p$ be an odd prime factor of $k$ and $d|M$. Since $v_2(k)\leq t=v_2(N)$ by Corollary 3.4
(b), it is sufficient to show that $v_p(k)\leq v_p(N)$. \par Case 1: $p|N$. Assume that $v_p(k)\geq v_p(N)+1$. It
follows that $v_p(k)\geq v_p(d)+1$. Suppose $k=p^{v_p(k)}k'$ and $d=p^{v_p(d)}d'$. Then by Lemma 2.1, we have
\begin{eqnarray*}
 C_d(k)  &  =  &  C_{p^{v_p(d)}}(k) C_{d'}(k) \\
         &  =  &  C_{p^{v_p(d)}}(p^{v_p(k)})C_{d'}(k')\\
         &  =  &  C_{p^{v_p(d)}}(p^{v_p(k)-1})C_{d'}(k')\\
         &  =  &   C_d(k/p).
\end{eqnarray*}
It yields that $S_{k/p}=S_k\neq -1$ by Lemma 3.1. This contradicts that $k\in K(P)$. So the assumption is not true
and hence we have $v_p(k)\leq v_p(N)$.
\par Case 2: $p\nmid N$. Clearly, $(d,p)=1$. Then by Lemma 2.1 (b) we have $C_d(k)=C_d(k/p)$. It yields that $S_{k/p}=S_k\neq
-1$ by Lemma 3.1. This contradicts that $k\in K(P)$. So this case does not exist.\par Thus we have $k|N$. Since
$i=min\{k\mid 1<k<N\, \text{and} \, S_k\neq -1\}$, it follows that $i\in K(P)$ and hence $i|N$.\end{proof}

\begin{lem}\textup{(In the proof of } \cite{1} \textup{ Theorem 3.3)}
Let $P(x)=\sum_{n=1}^{N-1}a_nx^n\in LC(N,i)$ and $1\leq j\leq i-1$. If $a_{li+j}=a_{li}$ for $0\leq l\leq m-1$
where $1\leq m\leq \frac{N-1}{i}-1$, then we have
\begin{eqnarray*}
(a)\qquad 0 & = & S_{mi+j}+1+(mi+j)(a_{mi+j}-a_{mi+j-1})\\
(b)\qquad 0 & = & (S_{(m+l)i+j}+1)+2((m+1)i+j)(a_{mi+j}-a_{mi+j-1})\\
  &   & +((m+1)i+j)(a_{(m+1)i+j}-a_{(m+1)i+j-1})
\end{eqnarray*}
\end{lem}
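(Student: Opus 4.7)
The plan is to derive both identities from Newton's identity (2.1), exploiting the structural hypothesis $a_{li+j}=a_{li}$ for $0\le l\le m-1$.

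For part (a), I would apply (2.1) at the two consecutive indices $k=mi+j$ and $k=mi+j-1$ and subtract. Using $a_0=1$, $S_1=-1$, and the abbreviation $\Delta_l:=S_l-S_{l-1}$, a short index-shift in the subtraction yields the ``difference form'' of Newton's identity,
$$\sum_{l=2}^{mi+j}a_{mi+j-l}\,\Delta_l=(mi+j)(a_{mi+j-1}-a_{mi+j}).$$
Since $\sum_{l=2}^{mi+j}\Delta_l=S_{mi+j}-S_1=S_{mi+j}+1$ by telescoping, part (a) is equivalent to the vanishing statement
$$\sum_{l=2}^{mi+j}\bigl(1-a_{mi+j-l}\bigr)\,\Delta_l=0.$$
Because each factor $1-a_{mi+j-l}\in\{0,2\}$, only those $l$ with $a_{mi+j-l}=-1$ contribute. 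I would prove this vanishing by induction on $m$, using part (a) at smaller $m$ to pin down which $\Delta_l$ are nonzero, and using the hypothesis to produce the key pairing
$$a_{mi+j-l}\bigr|_{l=l'i}=a_{(m-l')i+j}=a_{(m-l')i}=a_{mi+j-l}\bigr|_{l=l'i+j}\qquad(1\le l'\le m-1),$$
so that the weight $(1-a_{mi+j-l})$ takes the same value at the paired indices $l=l'i$ and $l=l'i+j$. This pairing is precisely what drives the cancellation in the sum.

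For part (b), I would apply (2.1) at $k=(m+1)i+j$ and use part (a) (already established at level $m$) to simplify the resulting Newton sum. The doubled coefficient $2((m+1)i+j)(a_{mi+j}-a_{mi+j-1})$ in (b) appears because this combination enters the identity at level $(m+1)i+j$ in two distinct ways: once directly through the summand $a_i\,S_{mi+j}=-S_{mi+j}$ (with $S_{mi+j}$ evaluated via part (a), producing one factor of $(mi+j)(a_{mi+j}-a_{mi+j-1})$ which is absorbed into $((m+1)i+j)(a_{mi+j}-a_{mi+j-1})$ up to a controllable shift), and once through the propagation of the level-$m$ contribution when the Newton sum is reorganised using the hypothesis.

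The main obstacle is the combinatorial bookkeeping for the inductive cancellation in (a). One must verify that the pairing induced by the hypothesis exactly exhausts the support of the nonzero $\Delta_l$'s, and that boundary indices (near $l=i$, $l=mi+j$, and at intermediate ``off-grid'' values) do not spoil the cancellation. In practice this forces one to run the induction simultaneously over all $j\in\{1,\ldots,i-1\}$, so that (a) is available at every adjacent-$j$ value needed to evaluate the $\Delta_l$'s; the restriction $1\le m\le\tfrac{N-1}{i}-1$ ensures the indices stay within the degree of $P$, keeping all applications of (2.1) legitimate.
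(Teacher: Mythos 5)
First, note that the paper itself offers no proof of this lemma: it is quoted verbatim from the proof of Theorem 3.3 in \cite{1}, so your attempt has to be measured against that underlying Borwein--Choi computation. Your reduction is correct and is indeed the right starting point: subtracting (2.1) at $k=mi+j$ and at $k-1$, and using $a_0=1$, $S_1=-1$, does give $\sum_{r=2}^{k}a_{k-r}\Delta_r=k(a_{k-1}-a_k)$ with $\Delta_r:=S_r-S_{r-1}$, so (a) is equivalent to $\sum_{r=2}^{k}(1-a_{k-r})\Delta_r=0$. But the cancellation mechanism you isolate as ``precisely what drives the proof'' is the wrong one. Running the induction correctly (on $m$, for all $j$ simultaneously): part (a) at levels $m'<m$, combined with the hypothesis, forces $S_r=-1$ for every $r<mi$ with $i\nmid r$, because the coefficient difference $a_{m'i+j}-a_{m'i+j-1}$ in (a) vanishes inside a constant block. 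Hence the support of the nonzero $\Delta_r$ with $r<mi$ is exactly $\{l'i,\,l'i+1\}$, with $\Delta_{l'i+1}=-\Delta_{l'i}=-(S_{l'i}+1)$, and the cancellation must pair $l'i$ with $l'i+1$; it works because $a_{(m-l')i+j}=a_{(m-l')i+j-1}$ (both lie in block $m-l'$). Your proposed pairing of $l=l'i$ with $l=l'i+j$ pairs a nonzero $\Delta$ against $\Delta_{l'i+j}=0$ (for $2\le j\le i-1$), so it cancels nothing except in the degenerate case $j=1$; the weight equality you state is true but irrelevant. The verification you defer (``that the pairing exactly exhausts the support'') would in fact reveal this mismatch. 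The remaining terms $r=mi,\dots,mi+j$ then drop out because their weights are $1-a_{j-j'}=0$ with $j-j'$ in block $0$, which closes the induction --- so the skeleton is salvageable, but the key step as you describe it would fail.

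For (b) the gap is larger: your explanation of the doubled coefficient is a guess, not a derivation, and it does not survive computation. At $K=(m+1)i+j$ the same reduction asks one to show $\sum_{r=2}^{K}(1-a_{K-r})\Delta_r=-2K\,c_j$ where $c_j:=a_{mi+j}-a_{mi+j-1}$, and the factor $2$ has two concrete, identifiable sources. First, the pair $r=i,\,i+1$ no longer cancels, because its weights are $1-a_{mi+j}$ and $1-a_{mi+j-1}$, which need not agree; it contributes $-(S_i+1)c_j=-2i\,c_j$, using $S_i=2i-1$. Second, the block-$m$ terms $\Delta_{mi},\dots,\Delta_{mi+j}$ now enter with weight $2$ rather than $0$, since $a_{K-r}=a_{i+(j-j')}=a_i=-1$ there, and by (a) they telescope to $-(mi+j)c_j$, contributing $-2(mi+j)c_j$; together $-2i-2(mi+j)=-2K$ as required, with all terms $r=(m+1)i,\dots,K$ carrying weight $0$. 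Nothing resembling this bookkeeping appears in your sketch --- the phrase about ``$a_iS_{mi+j}$'' and ``propagation \dots\ up to a controllable shift'' is not checkable --- and since you explicitly list the decisive cancellation as an unresolved obstacle, the proposal stands as a plan with the central steps of both (a) and (b) missing, and with the one mechanism you do commit to misidentified.
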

\begin{rem}
Borwein and Choi \cite{1} have pointed that $P(x)=\sum_{n=1}^{N-1}a_nx^n\in LC(N,i)$ is of the "periodicity" on
its coefficients, that is, $a_{li+j}=a_{li}$ for $0\leq l\leq \frac{N}{i}-1$ and $1\leq j\leq i-1$. We call it
"exterior periodicity", since $P(x)$ is also of the "interior periodicity" on $S_k$'s, that is, $S_{k}=-1$ for
$i\nmid k$. As seen from Lemma 3.7 above, the "periodicity" of $a_k$'s is totally determined by that of $S_k$'s.
Thus to prove the "periodicity" of $S_k$'s is key to prove Conjecture 1.2, and we will see it clearly from the
following theorem.
\end{rem}
\begin{thm}
Let $P(x)\in LC(N)$. $|K(P)|\leq1$ iff $P(x)$ is of the form \textup{(1.1)}.
\end{thm}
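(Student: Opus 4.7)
The plan is to prove the iff by induction on $N=p_1\cdots p_r$, using the multiplicative structure of (1.1) in the forward direction and Lemma 3.7 in the backward direction. For the forward direction, I would induct on $r$: for $r=1$, $P=\pm\Phi_{p_1}(\pm x)$ and a direct check gives $|K(P)|\leq 1$. For $r\geq 2$, factor $P(x)=\pm\Phi_{p_1}(\pm x)\cdot R(x^{p_1})$ with $R$ of form (1.1) for $N/p_1$, so $|K(R)|\leq 1$ inductively. The decomposition $S_k(P)=S_k(\Phi_{p_1}(\pm x))+S_k(R(x^{p_1}))$ together with $S_k(R(x^{p_1}))=p_1 S_{k/p_1}(R)$ when $p_1\mid k$ (and $0$ otherwise) confines the non-$-1$ values of $S_k(P)$ to multiples of a single integer $p_1 k'$ with $k'\in K(R)\cup\{1\}$, yielding $|K(P)|\leq 1$.

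For the backward direction, if $|K(P)|=0$ then $S_k=-1$ for all $1\leq k\leq N-2$, and the recursion (2.1) forces $a_0=a_1=\cdots=a_{N-1}$, so $P=\pm\mathbb{P}_N(x)=\pm\Phi_{p_1}(x)\Phi_{p_2}(x^{p_1})\cdots\Phi_{p_r}(x^{p_1\cdots p_{r-1}})$, which is of form (1.1). If $|K(P)|=1$ with $K(P)=\{i\}$, I would separately dispose of the exceptional family $\{\pm\mathbb{P}_N(\pm x)\}$ (which is of form (1.1) by direct substitution) and, for any other $P$, normalize via the unique element of $\{\pm P(\pm x)\}$ lying in $LC(N,i)$ for some $i\geq 2$: using $a_k\in\{\pm 1\}$ together with Newton's identity (2.1), $S_k=-1$ for $k<i$ forces $a_k=1$ there, and $S_i\neq -1$ forces $a_i=-1$, so this $i$ matches $K(P)$. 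By Corollary 3.6, $i\mid N$. Since $K(P)=\{i\}$ means $S_k=-1$ for every $k\in\{1,\ldots,N-2\}$ not divisible by $i$, iterating Lemma 3.7 (parts (a) and (b)) over $j=1,\ldots,i-1$ and $m=0,1,2,\ldots$ delivers the coefficient periodicity $a_{li+j}=a_{li}$, whence $P(x)=\mathbb{P}_i(x)\cdot Q(x^i)$ for some $Q\in LC(N/i)$.

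To close the induction on $N$, observe that summing the power sums of $\mathbb{P}_i(x)$ and $Q(x^i)$ separately yields $S_{ik'}(P)=(i-1)+iS_{k'}(Q)$ for $1\leq k'\leq N/i-1$, and in particular $S_{k'}(Q)\neq-1\Leftrightarrow S_{ik'}(P)\neq-1$. Combined with $K(P)=\{i\}$, this gives $1\in\{k':S_{k'}(Q)\neq-1\}$, so $K(Q)\subseteq\{1\}$ and $|K(Q)|\leq 1$. By the inductive hypothesis, $Q$ is of form (1.1) for $N/i$; writing $i=p_1\cdots p_t$ and using $\mathbb{P}_i(x)=\Phi_{p_1}(x)\Phi_{p_2}(x^{p_1})\cdots\Phi_{p_t}(x^{p_1\cdots p_{t-1}})$, we substitute $x\mapsto x^i$ into $Q$'s factorization and assemble $P(x)$ in the form (1.1).

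I expect the hardest step to be the iterative application of Lemma 3.7 to extract the full coefficient periodicity. Parts (a) and (b) must be chained correctly---starting from the base case $a_j=1$ for $j<i$ (supplied by $P\in LC(N,i)$) and propagating outward through all $m\geq 1$ and $j=1,\ldots,i-1$---with each invocation requiring the previously established periodicity as a hypothesis. A secondary delicate point is the sign normalization to $LC(N,i)$-form: the boundary case $K(P)=\{1\}$ forces $P$ into the exceptional family $\{\pm\mathbb{P}_N(\pm x)\}$ (where no normalization to $LC(N,i)$ with $i\geq 2$ exists), and this family must be handled as a separate base case.
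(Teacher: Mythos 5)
Your main line for the central case is the paper's own proof: normalize so that $K(P)=\{i\}$ with $i\geq 2$ puts $P$ in $LC(N,i)$ via Newton's identity (2.1), chain Lemma 3.7(a) to get the coefficient periodicity $a_{li+j}=a_{li}$, factor $P(x)=\mathbb{P}_i(x)Q(x^i)$ with $Q\in LC(N/i)$, and induct on $N$; your transfer formula $S_{ik'}(P)=(i-1)+iS_{k'}(Q)$, which the paper silently omits when it invokes the inductive hypothesis for $P_2=Q$, is correct and a genuine improvement in care. The gap is your disposal of the boundary case: the claim that $K(P)=\{1\}$ forces $P\in\{\pm\mathbb{P}_N(\pm x)\}$ is false. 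For \emph{any} cyclotomic Littlewood $\tilde P\in LC(N,i)$ one has $S_k(\tilde P(-x))=(-1)^kS_k(\tilde P)$, so $S_1(\tilde P(-x))=1\neq -1$, and once $1$ lies in $\{k:S_k\neq -1\}$ it is automatically the unique least-type divisor, i.e. $K(\tilde P(-x))=\{1\}$. Concretely, take the paper's $P_2(x)=\Phi_2(x)\Phi_4(x)\Phi_{24}(x)\in LC(12,4)$: then $P_2(-x)$ has coefficient vector $(1,-1,1,-1,-1,1,-1,1,1,-1,1,-1)$, which is neither constant nor alternating, so $P_2(-x)\notin\{\pm\mathbb{P}_{12}(\pm x)\}$, yet $K(P_2(-x))=\{1\}$.

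This is not a patchable corner, for two reasons. First, the literal statement restricted to $K(P)=\{1\}$ is equivalent to Conjecture 1.2 in full: any counterexample to the conjecture, normalized into $LC(N,i)$ and composed with $x\mapsto -x$, would have $K=\{1\}$ and still fail to be of form (1.1), since form (1.1) is stable under $x\mapsto\pm x$ and negation; so a correct short proof of your ``separate base case'' would prove the whole conjecture. Second, your induction routes through this case at every step: by your own transfer formula with $S_i(P)=2i-1$ you get $S_1(Q)=1$, hence $K(Q)=\{1\}$ always, so each inductive step invokes precisely the subcase whose treatment is wrong. The paper sidesteps this by tacitly reading the theorem only for normalized polynomials --- its proof begins ``Suppose $P(x)\in LC(N,i)$'', which forces $S_1=-1$, $1\notin K(P)$, and $K(P)=\{i\}$ with $i\geq 2$ --- and the honest repair of your write-up is the same restriction, re-normalizing the quotient to $Q(-y)$ before recursing; be aware, though, that controlling $K$ of the renormalized quotient from $K(P)=\{i\}$ is then the remaining delicate point, one the paper's own terse ``by induction'' also glosses over.
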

\begin{proof}
Since the sufficiency is easy to show by Lemma 3.7 (a), we only deal with its necessity. And the case of
$|K(P)|=0$ is trivial as it means that $P(x)=\mathbb{P}_N(x)$. \\Suppose $P(x)\in LC(N,i)$ We use induction on
$N$. $K(P)=\{i\}$ implies that $S_{k}=-1$ for $i\nmid k$. By Lemma 3.7 (a) we have
$a_{li}=a_{li+1}=\ldots=a_{li+i-1}$ for $0\leq l\leq \frac{N}{i}-1$. It follows that $P(x)=P_1(x)P_2(x^i)$ where
$P_1(x)=1+x+\cdots+x^{i-1}\in LC(i)$ and $P_2(x)\in LC(N/i)$. By induction, $P_1(x)$ and $P_2(x)$ are of the form
(1.1) and hence so is $P(x)$.
\end{proof}

\begin{lem}
Let $k=mi+j$ with $1\leq j\leq i-1$. \\
\textup{(a)}  If $v_2(k)\geq v_2(i)$, we have $v_2((m+1)i+j)\neq v_2(mi+j)$.\\
\textup{(b)}  If $v_2(k)=v_2(i)=t'$, we have $v_2((m+1)i+j)\geq t'+1$.
\end{lem}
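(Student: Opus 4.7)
The plan is to observe that $(m+1)i+j$ and $mi+j$ differ by exactly $i$, so the whole lemma reduces to the elementary additivity property of the $2$-adic valuation: for integers $x,y$, one has $v_2(x+y)=\min(v_2(x),v_2(y))$ whenever $v_2(x)\neq v_2(y)$, and $v_2(x+y)\geq v_2(x)+1$ whenever $v_2(x)=v_2(y)$. With $s:=v_2(i)$ and $k:=mi+j$, proving the lemma amounts to comparing $v_2(k+i)$ with $v_2(k)$ in two subcases determined by the hypothesis.

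For part (a), I would split according to whether $v_2(k)>s$ or $v_2(k)=s$. In the first subcase, the strict inequality $v_2(k)>v_2(i)$ forces $v_2(k+i)=v_2(i)=s<v_2(k)$, so $v_2((m+1)i+j)\neq v_2(mi+j)$. In the second subcase, write $k=2^s\alpha$ and $i=2^s\beta$ with $\alpha,\beta$ odd; then $k+i=2^s(\alpha+\beta)$ and $\alpha+\beta$ is even, hence $v_2(k+i)\geq s+1>s=v_2(k)$, and the two valuations again differ.

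For part (b), the hypothesis $v_2(k)=v_2(i)=t'$ is precisely the second subcase of (a). The same factorisation $k+i=2^{t'}(\alpha+\beta)$ with $\alpha+\beta$ even gives $v_2((m+1)i+j)=v_2(k+i)\geq t'+1$ immediately.

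I do not anticipate any real obstacle: the whole argument is a one-line $2$-adic computation, modulo the trivial case split in (a). The only care needed is to ensure that, when $v_2(k)=v_2(i)$, both $\alpha$ and $\beta$ are odd so that their sum is genuinely divisible by $2$. The interest of the lemma evidently lies not in its proof but in its forthcoming role in the analysis of the periodicity of the sums $S_k$ flagged in Remark~3.8 and Corollary~3.4.
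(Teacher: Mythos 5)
Your proof is correct, and for part (b) it is genuinely tidier than the paper's. Both arguments rest on the same ultrametric property of $v_2$, and your treatment of (a) is essentially the paper's argument turned around: the paper assumes $v_2((m+1)i+j)=v_2(mi+j)=t'$, writes $(m+1)i+j=2^{t'}M_1$ and $mi+j=2^{t'}M_2$ with $M_1,M_2$ odd, subtracts to get $i=2^{t'}(M_1-M_2)$, and concludes $v_2(i)\geq t'+1>v_2(k)$, contradicting the hypothesis $v_2(k)\geq v_2(i)$ (the printed ``$v_2(i)\geq 2^{t'+1}$'' there is a typo for $v_2(i)\geq t'+1$); your direct two-case version (if $v_2(k)>v_2(i)$ then $v_2(k+i)=v_2(i)<v_2(k)$, if $v_2(k)=v_2(i)$ then $v_2(k+i)\geq v_2(k)+1$) proves the same statement and additionally records on which side the two valuations differ in each subcase. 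Where you genuinely diverge is in (b): the paper re-derives it from the decomposition $k=mi+j$, first showing $v_2(j)\geq t'$ and then splitting on the parity of $m$ to determine $v_2(j)$ exactly before estimating $v_2((m+1)i+j)$, whereas you simply note that $(m+1)i+j=k+i$ is a sum of two integers of equal valuation $t'$, so (b) is literally the second subcase of your proof of (a). This unification buys brevity and also makes visible that the constraint $1\leq j\leq i-1$ plays no role in the valuation statement itself (it matters elsewhere only because it guarantees $i\nmid k$), a point the paper's parity-of-$m$ analysis obscures.
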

\begin{proof}
(a) Assume that $v_2((m+1)i+j)=v_2(mi+j)=v_2(k):=t'$. Suppose that $(m+1)i+j=2^{t'}M_1$ and $mi+j=2^{t'}M_2$. By
substraction, we get $i=2^{t'}(M_1-M_2)$. Since both $M_1$ and $ M_2$ are odd, it follows that $v_2(i)\geq
2^{t'+1}>v_2(k)$. It is a contradiction.\\
(b)Noting that $v_2(k)=v_2(i)=t'$, we have $v_2(j)\geq t'$. \par Case 1: $2|m$. We claim $v_2(j)=t'$ since
otherwise $v_2(k)\geq t'+1$. It follows that $v_2((m+1)i+j)\geq t'+1$.\par Case 2: $2\nmid m$. We claim
$v_2(j)>t'$ since otherwise $v_2(k)\geq t'+1$. It follows that $v_2((m+1)i+j)\geq t'+1$ .\par Thus in either case
we have $v_2((m+1)i+j)\geq t'+1$.
\end{proof}

\begin{cor}
Let $P(x)\in LC(N)$. If $|T(E)|\leq1$, $P(x)$ is of the form \textup{(1.1)}.
\end{cor}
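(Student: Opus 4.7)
The plan is to mirror the proof of Theorem 3.9, replacing the strong assumption $|K(P)|\leq 1$ by the weaker one $|T(E)|\leq 1$ and paying for this by using Lemmas 3.7(b) and 3.10(b) wherever the Theorem 3.9 argument breaks down. First I dispose of the trivial case: if $|T(E)|=0$ then formula (3.4) gives $S_k=-1$ for every $k$, so $P(x)=\mathbb{P}_N(x)$ is of the form (1.1). So assume $T(E)=\{t'\}$ and $P(x)\in LC(N,i)$. Then (3.4) yields $S_k=-1$ whenever $v_2(k)\neq t'$, and since $S_i\neq -1$ this forces $v_2(i)=t'$.

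The route to (1.1) is to establish the exterior periodicity $a_{li+j}=a_{li}$ for $0\leq l\leq N/i-1$ and $1\leq j\leq i-1$; once that holds, $S_k=-1$ for every $k$ with $i\nmid k$, so $K(P)=\{i\}$ and Theorem 3.9 closes the argument. I proceed by outer induction on $m$, the base $m=0$ being the definition of $LC(N,i)$, together with an inner induction on $j$, which reduces the step to showing $a_{mi+j}=a_{mi+j-1}$. Granted the outer hypothesis (which supplies $a_{li+j}=a_{li}$ for all $l<m$), Lemma 3.7(a) provides
\[ 0=S_{mi+j}+1+(mi+j)(a_{mi+j}-a_{mi+j-1}), \]
so if $v_2(mi+j)\neq t'$ the identity $S_{mi+j}=-1$ closes the step immediately.

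The difficult case is $v_2(mi+j)=t'$, where $S_{mi+j}$ need not vanish. Here I apply Lemma 3.10(b) with $k=mi+j$---its hypothesis $v_2(k)=v_2(i)=t'$ being exactly the present case---to obtain $v_2((m+1)i+j)\geq t'+1$, whence $S_{(m+1)i+j}=-1$. Substituting into Lemma 3.7(b) and dividing by $(m+1)i+j$ gives
\[ 2(a_{mi+j}-a_{mi+j-1})+(a_{(m+1)i+j}-a_{(m+1)i+j-1})=0, \]
and since each difference lies in $\{-2,0,2\}$ both terms must be zero. For the boundary block $m=N/i-1$ the index $(m+1)i+j=N+j$ exits the support of $P$ and Lemma 3.7(b) is unavailable; I instead invoke the reciprocal symmetry $a_n=\epsilon a_{N-1-n}$ with $\epsilon=\pm 1$ (valid because $P$ is cyclotomic of degree $N-1$), which gives $a_{N-i+j}=\epsilon a_{i-j-1}$ and $a_{N-i+j-1}=\epsilon a_{i-j}$, both equal to $\epsilon$ because $a_0=\cdots=a_{i-1}=1$.

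The main obstacle is this second case: the hypothesis $|T(E)|=1$ does not directly force $S_{mi+j}=-1$ at the critical indices $v_2(mi+j)=t'$, so one must bootstrap from a neighbouring index. The combined use of Lemma 3.10(b) (to shift to the ``safe'' zone $v_2>t'$) with Lemma 3.7(b) (to translate that information back into a relation among the $a$'s), plus the rigidity imposed by coefficient differences lying in $\{-2,0,2\}$, is the crux; the boundary block at $m=N/i-1$ is a minor technicality handled by the reciprocal symmetry of $P$.
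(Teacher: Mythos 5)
Your proof is correct and rests on exactly the same mechanism as the paper's proof of Corollary 3.11: in the critical case $v_2(mi+j)=t'=v_2(i)$ you combine Lemma 3.10(b) (pushing to $v_2((m+1)i+j)\geq t'+1$, where $S_{(m+1)i+j}=-1$) with Lemma 3.7(b) and the rigidity of coefficient differences lying in $\{-2,0,2\}$, then finish via $K(P)=\{i\}$ and Theorem 3.9. The only difference is packaging: the paper argues by contradiction from the minimal element $k'=\min\{k\in K(P)\mid k>i\}$ (whose divisibility $k'\mid N$ keeps the argument away from the boundary) rather than by your direct induction over blocks, which is why you additionally need --- and correctly supply --- the treatment of the last block $m=N/i-1$ via the reciprocal symmetry $x^{N-1}P(1/x)=\pm P(x)$.
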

\begin{proof}
The case of $|T(E)|=0$ is trivial as it means that $P(x)=\mathbb{P}_N(x)$.\\ Suppose $P(x)\in LC(N,i)$ and
$t'=v_2(i)$. $|T(E)|=1$ implies $T(E)=\{t'\}$ and hence $S_k=-1$ for $v_2(k)\neq t'$. We claim that $K(P)=\{i\}$.
Otherwise assume $k'=min\{k\in K(P)\mid k>i\}$ then we have $i\nmid k', v_2(k')=t'$and $S_{k'}\neq -1$. Suppose
$k'=mi+j$ for $1\leq m\leq \frac{N-1}{i}-1$. By Lemma 3.10 (b) we have $v_2((m+1)i+j)\geq t'+1$ implying
$S_{((m+1)i+j)}=-1$. By Lemma 3.7 (b), we have
$$2(a_{mi+j}-a_{mi+j-1})+(a_{(m+1)i+j}-a_{(m+1)i+j-1})=0.$$
Since $a_{mi+j},a_{mi+j-1},a_{(m+1)i+j},a_{(m+1)i+j-1}=\pm1$, it follows that
$$a_{mi+j}=a_{mi+j-1}\qquad \text{and}\qquad a_{(m+1)i+j}=a_{(m+1)i+j-1}.$$
By Lemma 3.7 (a), we have $S_{k'}=S_{mi+j}=-1$. It is a contradiction. So our claim that $K(P)=\{i\}$ is true. The
result follows by Theorem 3.9.
\end{proof}

\begin{cor}
Let $P(x)\in LC(N)$. Conjecture 1.2 is true in the following cases:
\begin{enumerate}
\item[(a)]  $P(x)$ is square-free.
\item[(b)]  $N$ is odd.
\item[(c)]  $N=2^t$.
\item[(d)]  $N=2p^l$ where $p$ is an odd prime and $l\geq1$.
\item[(e)]  $N=2M$ with $M$ odd and $e(4d)=0$ for any $d|M$.
\end{enumerate}
\end{cor}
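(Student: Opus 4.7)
The plan is to reduce each of the five parts to Theorem 3.9 (via $|K(P)|\leq 1$) or Corollary 3.11 (via $|T(E)|\leq 1$), exploiting the particular structure that each hypothesis provides.

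Part (b) is immediate: when $N$ is odd, $t=v_2(N)=0$, so any $t'\in T(E)$ satisfies $0\leq t'\leq 0$, hence $|T(E)|\leq 1$ and Corollary 3.11 yields (1.1). Part (c) follows from Theorem 3.9: Corollary 3.6 forces every $k\in K(P)$ to be a power of $2$, but divisors of $2^t$ form a chain under divisibility, and an antichain in a chain has at most one element. Part (e) is short as well: since $(e(d),e(2d),e(4d))$ has third coordinate zero both in $\mathbb{P}_N$ and in $P$ by hypothesis, and only $t'=1$ transformations modify that coordinate, one can choose a sequence of $E$-transformations from $\mathbb{P}_N$ to $P$ that uses only $t'=0$ steps; this gives $T(E)\subseteq\{0\}$ and Corollary 3.11 applies.

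Part (a) uses the square-free dichotomies (3.1) and (3.2): for each $d>1$ with $d\mid M$, the target tuple is either the $\mathbb{P}_N$-tuple $(1,\ldots,1,0)$ (no transformation needed) or $(0,\ldots,0,1)$ (one $(t,d)$ transformation); for $d=1$, the target is either $(0,1,\ldots,1,0)$ or $(1,0,1,\ldots,1,0)$ (one $(0,1)$ transformation). Thus $T(E)\subseteq\{0,t\}$. If $t=0$ or the $(0,1)$ transformation is absent, then $|T(E)|\leq 1$ and Corollary 3.11 applies. Otherwise $t\geq 1$ and the $(0,1)$ transformation is present; by Lemma 2.1(d) we have $T_t(1)=0$ since $v_2(1)=0\neq t$, so only the $(0,1)$ step contributes to $S_1$, giving $S_1=-1+T_0(1)C_1(1)=1\neq -1$. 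Hence $1\in K(P)$, and since $1$ divides every positive integer, $K(P)=\{1\}$, and Theorem 3.9 finishes.

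Part (d) is the most delicate. Here $t=1$ and the divisors of $N=2p^l$ form a $2\times(l+1)$ grid whose width is $2$; since $K(P)$ is an antichain, $|K(P)|\leq 2$, and the case $|K(P)|\leq 1$ is handled by Theorem 3.9. If $|K(P)|=2$, the antichain must be $\{2p^{b_1},p^{b_2}\}$ with $0\leq b_1<b_2\leq l$; because $p\geq 3$, the minimum of this pair is $i=2p^{b_1}$, and writing $k'=p^{b_2}=mi+j$ with $j=p^{b_1}$ and $m=(p^{b_2-b_1}-1)/2$, I aim to reach a contradiction by inductively establishing $a_{li+j}=a_{li}$ for $0\leq l\leq m-1$ (the hypothesis of Lemma 3.7(a)), so that Lemma 3.7(a) forces $S_{k'}=-1$, contradicting $k'\in K(P)$. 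Setting up this induction is the main obstacle: the two elements of $K(P)$ sit at different $2$-adic levels ($v_2(i)=1$ but $v_2(k')=0$), so Lemma 3.10(b) does not apply directly as it did in the proof of Corollary 3.11. The fix I intend is to split the inductive step according to $v_2(li+j-1)$ and combine Lemma 3.7(b) with the divisibility constraint $2^{v_2(\cdot)+1}\mid(S_{\cdot}+1)$ from Corollary 3.4(d) to cancel contributions from intermediate values of $S_{li+j-1}$, pushing the only possible non-$(-1)$ contribution into $S_{k'}$ itself.
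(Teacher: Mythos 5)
Parts (b), (c) and (e) of your proposal are correct and coincide with the paper's own proof: (b) via $0\le t'\le t=0$, (c) via the observation that $K(P)$ consists of powers of $2$ and is an antichain under divisibility (the paper says ``$K(P)$ is the set of least-type divisors''), and (e) by noting that $e(4d)=0$ forces a transformation path from $\mathbb{P}_N$ to $P$ using only $t'=0$ steps, so $|T(E)|\le 1$. Part (a) also follows the paper's route up to the final step, where you diverge: the paper notes that a $(0,1)$-step would give $S_1=1$, contradicting $S_1=-1$ (which holds for every $P\in LC(N,i)$ since $a_1=1$), hence $0\notin T(E)$ and $|T(E)|\le 1$; you instead accept $S_1=1$, conclude $K(P)=\{1\}$, and invoke Theorem 3.9. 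That branch is actually inconsistent ($1\in K(P)$ contradicts $i\in K(P)$ from Corollary 3.6, as $i\ge 2$ and $1\mid i$), so your derivation there is vacuously valid but obscures that the configuration cannot occur; formally harmless, though the paper's version is the honest one.

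The genuine gap is in (d), which is precisely where the paper does all of its real work. Your reduction to $K(P)=\{i,k'\}$ with $i=2p^{b_1}$, $k'=p^{b_2}$, $b_2>b_1$ is fine, and your diagnosis is right that Lemma 3.10(b) is unavailable because $v_2(k')=0<1=v_2(i)$. But from that point on you only announce a plan (``I aim\dots'', ``the fix I intend\dots''), and the plan as stated cannot close the case: since $i$ is even and $j=k'-mi$ is odd, the indices $mi+j$ and $(m+1)i+j$ are both odd, so Corollary 3.4(d) yields only the trivial constraint $2\mid S_k+1$, and the $2$-adic separation that powered Corollary 3.11 simply does not exist here. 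Note also that establishing $a_{li+j}=a_{li}$ for $0\le l\le m-1$ is the easy part (it follows from minimality of $k'$, since every $k<k'$ with $S_k\neq-1$ has a least-type divisor in $K(P)$ and hence is a multiple of $i$); Lemma 3.7(a) then gives only $S_{k'}\in\{-1,\pm 2k'-1\}$, not $S_{k'}=-1$, so the whole difficulty sits at the step $l=m$, which your sketch does not resolve. The paper closes case (d) by an entirely different, arithmetic argument: it evaluates $S_i$ and $S_{k'}$ exactly as linear combinations of Ramanujan sums $C_{p^j}$, using the constraint $e(p^j)+e(2p^j)+2e(4p^j)=2$ from Lemma 3.1 (so each coefficient is $2-4e(4p^j)$), compares with $S_i=2i-1$ and $S_{k'}=\pm2k'-1$ from Lemma 3.7(a), and reduces the resulting identities (3.5)--(3.7) after division by $p$ to the absurdity $2\mid p$; nothing resembling this computation appears in your proposal. (Within your own framework there is a cheaper escape you missed: with $K(P)=\{i,k'\}$ exactly, the index $(m+1)i+j=k'+i$ is odd and lies strictly between $k'$ and $2k'$, hence is a multiple of neither $i$ nor $k'$, so $S_{k'+i}=-1$; then Lemma 3.7(b) forces $a_{mi+j}=a_{mi+j-1}$ and Lemma 3.7(a) gives $S_{k'}=-1$, a contradiction --- but this needs the separate check $k'+i\le N-2$, which fails for $N=6$.) As written, case (d) is unproven in your proposal.
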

\begin{proof}
By Theorem 3.9 and Corollary 3.11, it is sufficient to prove that $|K(P)|\leq1$ or $|T(E)|\leq1$.
\par (a)Since $P(x)$ is square-free, by (3.1) and (3.2) we have that $\{0,t\}\supset T(E)$. If $t>0$ we claim that $0\notin T(E)$.
Otherwise by Corollary 3.4 (c) we have $S_1=-1-2C_2(1)=1$ which contradicts $S_1=-1$. So $|T(E)|\leq |\{t\}|=1$
for $t>0$. If $t=0$, it is clear that $|T(E)|\leq |\{0\}|=1$. \par (b) It is just the case of $t=0$ in (a).\par
(c) By Corollary 3.6, each number in $K(P)$ is the power of 2. Since $K(P)$ is the set of least-type divisors, we
have $K(P)=\{i\}$. \par (d) For any $k'\in K(P)$, Since $k'|N$ by Corollary 3.6, we have that $k'=p^{l'}$ or
$k'=2p^{l'}$ with $l'\leq l$.
\par case 1: $i=p^{l'}$. Assume $k_1=p^{l_1}\in K(P)$ (or: $k_2=2p^{l_2}\in K(P)$). Then we have
$k_1=p^{l_1}>i=p^{l'}$ (or: $k_2=2p^{l_2}>i=p^{l'}$). It follows that $l_1>l'$ (or: $2>p^{l'-l_2}$ also implying
$l_2>l'$). Thus $i|k_1$ (or: $i|k_2$). This contradicts that $K(P)$ is the set of least-type divisors. So we have
$K(P)=\{i\}$.\par case 2: $i=2p^{l_1}$. First we claim that $2p^{l_2}\notin K(P)$ for all $0\leq l_2\leq l$.
Otherwise it follows that $l_2>l_1$ implying $i|2p^{l_2}$. It is a contradiction. Then we claim that
$p^{l_2}\notin K(P)$ for all $0\leq l_2\leq l$. If not, assume $k'=min\{p^{l_2}\in K(P)\}$. Then we have that
$S_{i}=2i-1=4p^{l_1}-1, S_{k'}=\pm 2k'-1=\pm 2p^{l_2}-1$ by Lemma 3.7 (a). By Lemma 3.1 we have
\begin{eqnarray*}
S_{i}  &  =  &  \sum\nolimits_{j=1}^l(e(p^j)C_{p^j}(i)+e(2p^j)C_{2p^j}(i)+e(4p^j)C_{4p^j}(i))+e(1)C_1(i)+e(2)C_2(i)\\
       &  =  &  \sum\nolimits_{j=1}^l(e(p^j)+e(2p^j)C_{2}(2)+e(4p^j)C_{4}(2))C_{p^j}(p^{l_1})+e(1)C_1(2)+e(2)C_2(2)\\
       &  =  &  \sum\nolimits_{j=1}^l(e(p^j)+e(2p^j)-2e(4p^j))C_{p^j}(p^{l_1})+e(1)+e(2)\\
       &  =  &  \sum\nolimits_{j=1}^l(2-4e(4p^j))C_{p^j}(p^{l_1})+1\qquad \text{(  by Lemma 3.1)}\\
       &  =  &  \begin{cases} (2-4e(4p))(-1)+1 & l_1=0 \\
                             \sum\nolimits_{j=1}^{l_1}(2-4e(4p^j))(p^{j}-p^{j-1})+(2-4e(4p^{l_1+1}))(-p^{l_1})+1 &
                             l\geq1
                \end{cases}
\end{eqnarray*}
Similarly, noting that $2\nmid k'$ and $k'>i$ hence $l_2>l_1>0$, we have
\begin{eqnarray*}
S_{k'} &  =  &  \sum\nolimits_{j=1}^l(e(p^j)C_{p^j}(k')+e(2p^j)C_{2p^j}(k')+e(4p^j)C_{4p^j}(k'))+e(1)C_1(k)+e(2)C_2(k)\\
       &  =  &  \sum\nolimits_{j=1}^l(e(p^j)+e(2p^j)-2e(4p^j))C_{p^j}(p^{l_2})+e(1)-e(2)\\
       &  =  &  \sum\nolimits_{j=1}^{l_2}(2-4e(4p^j))(p^{j}-p^{j-1})+(2-4e(4p^{l_2+1}))(-p^{l_2})-1
\end{eqnarray*}
$e(1)-e(2)=1$ comes from that $S_{1}=(2-4e(4p))(-1)+e(1)-e(2)=-1$, $e(4p)=0,1$ and $e(1)-e(2)=\pm1$. We claim that
$l_2\geq2$. If not, since $l_2>0$, it follows that $l_2=1$. Therefore we have \setcounter{equation}{4}
\begin{eqnarray}
\pm2p-1=(2-4e(4p))(p-1)+(2-4e(4p))(-p)-1.
\end{eqnarray}
Dividing both sides of (3.5) by $p$ and noting that $e(4p)=0,1$, we get $2|p$. It is impossible. So we have
$l_2\geq2$.
\par Case 2.1: $l_1\geq 1$. Subtracting $S_{i}$ from $S_{k'}$, we have
\begin{eqnarray}
\pm 2p^{l_2}- 4p^{l_1} &  =  &  \sum\nolimits_{j={l_1+1}}^{l_2}(2-4e(4p^j))(p^{j}-p^{j-1})\nonumber\\
                       &     &  +(2-4e(4p^{l_2+1}))(-p^{l_2})-(2-4e(4p^{l_1+1}))(-p^{l_1})-2
\end{eqnarray}
\par Case 2.2: $l_1=0$. Then we have $4-1=(2-4e(4p))(-1)+1$. It follows that $e(4p)=1$. Substituting $e(4p)$ by 1 in $S_{k'}$, we have
\begin{eqnarray}
\pm 2p^{l_2}-1 &  =  &  -2(p-1)+\sum\nolimits_{j=2}^{l_2}(2-4e(4p^j))(p^{j}-p^{j-1})\nonumber\\
            &     &  +(2-4e(4p^{l_2+1}))(-p^{l_2})-1
\end{eqnarray}
Dividing both sides of (3.6) and (3.7) by $p$, we get the same result: $2|p$. It is impossible. So $p^{l_2}\notin
K(P)$ and hence $K(P)=\{i\}$.\par Thus we prove that $K(P)=\{i\}$ for $N=p^l$ with $p$ odd prime and $l\geq1$.
\par (e) For any $1<d|M$, since $e(4d)=0$, the $E-$transformation on $(e(d),e(2d),e(4d))$ must be
$(1,1,0)\rightarrow (2,0,0)$ or $(1,1,0)\rightarrow (0,2,0)$. Likewise for $d=1$, we have $(1,1)\rightarrow (0,2)$
or $(1,1)\rightarrow (2,0)$. It follows that $|T(E)|\leq|\{0\}|=1$.
\end{proof}

\section{Further Investigation and a Conjecture}
Let $P(x)\in LC(N,i)$. As seen from Remark 3.8 and Theorem 3.9, the key to prove that $P(x)$ is of the form (1.1)
is to prove that $K(P)=\{i\}$. We have proved in Corollary 3.11 that $|T(E)|=1$ implies $K(P)=\{i\}$. In fact, we
can apply induction on $|T(E)|$ under some constraints.
\par For convenience, define $\widetilde{LC}(N):=\bigcup_{2\leq
i|N}LC(N,i)$. And to express clearly, we use $E_{P_2 \rightarrow P_1}$ to denote the $E-$transformation such that
$E(P_2)=P_1$.
\begin{lem}
Assume that $P(x)\in \widetilde{LC}(N)$ is of the form \textup{(1.1)} if $|T(E_{\mathbb{P}_N \rightarrow P})|\leq
n$. Let $P_1(x)\in LC(N,i)$. If there exists $P_2(x)\in LC(N,i)$ satisfying \\ \textup{(1)} \; $|T(E_{\mathbb{P}_N
\rightarrow P_2})|\leq n;$ \\\textup{(2)} \; $T(E_{P_2 \rightarrow P_1})=\{t'\}\quad with\quad t'\geq v_2(i).$
\\Then $P_1(x)$ is of the form \textup{(1.1)} too.
\end{lem}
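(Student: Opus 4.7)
The plan is to show $K(P_1) = \{i\}$, which by Theorem 3.9 will yield that $P_1(x)$ is of the form (1.1). The induction hypothesis applies to $P_2$ because $|T(E_{\mathbb{P}_N \rightarrow P_2})| \leq n$, so $P_2(x)$ is of the form (1.1) and hence $K(P_2) = \{i\}$ by Theorem 3.9. Since $i$ is then the unique least-type divisor of $\{k : S_k(P_2) \neq -1\}$, every such $k$ must be a multiple of $i$; equivalently, $S_k(P_2) = -1$ whenever $i \nmid k$.

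Comparing $P_1$ and $P_2$ via (3.4) and using $T(E_{P_2 \rightarrow P_1}) = \{t'\}$, $S_k(P_1) = S_k(P_2)$ for every $k$ with $v_2(k) \neq t'$. In particular, if $i \nmid k$ and $v_2(k) \neq t'$ then $S_k(P_1) = -1$ automatically. What remains is the case $i \nmid k$ and $v_2(k) = t'$, and here I would follow the contradiction argument from the proof of Corollary 3.11 with $P_2$ playing the role of $\mathbb{P}_N$.

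Specifically, suppose for contradiction there exists $k'$ with $i \nmid k'$, $v_2(k') = t'$, and $S_{k'}(P_1) \neq -1$; pick $k'$ minimal. Write $k' = mi + j$ with $1 \leq j \leq i-1$. Since $t' \geq v_2(i)$, Lemma 3.10(a) forces $v_2((m+1)i + j) \neq t'$, so $S_{(m+1)i+j}(P_1) = S_{(m+1)i+j}(P_2)$, and the latter equals $-1$ because $i \nmid (m+1)i+j$. By the minimality of $k'$ together with the previous paragraph, $S_k(P_1) = -1$ for every $k < k'$ with $i \nmid k$; a double induction on $(l',j')$ using Lemma 3.7(a) and the base data $a_0 = \cdots = a_{i-1} = 1$ therefore establishes the periodicity $a_{l'i+j'}(P_1) = a_{l'i}(P_1)$ for $0 \leq l' \leq m-1$ and $1 \leq j' \leq i-1$. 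Applying Lemma 3.7(b) at $(m,j)$ and substituting $S_{(m+1)i+j}(P_1) = -1$ reduces it to
\[
2\bigl(a_{mi+j} - a_{mi+j-1}\bigr) + \bigl(a_{(m+1)i+j} - a_{(m+1)i+j-1}\bigr) = 0,
\]
which, since all coefficients are $\pm 1$, forces $a_{mi+j} = a_{mi+j-1}$. Lemma 3.7(a) then gives $S_{k'}(P_1) = -1$, the desired contradiction.

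The main delicate point I foresee is the bookkeeping for the coefficient periodicity: the minimality of $k'$ must be invoked to thread together the two cases $v_2(l'i+j') \neq t'$ and $v_2(l'i+j') = t'$ at every step of the inner induction, rather than treated once at the end. A secondary, mostly cosmetic, issue is the range requirement $(m+1)i + j \leq N-1$ demanded by Lemma 3.7(b); this is automatic when $k' \leq N-i-1$, and in the extremal range one can either rule it out directly by minimality or reduce via the palindromic identity $x^{N-1}P_1(1/x) = \pm P_1(x)$.
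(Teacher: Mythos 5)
Your proof is correct and takes essentially the same route as the paper's: compare $S_k(P_1)$ with $S_k(P_2)$ via (4.1), force $v_2(k')=t'$ for a minimal bad $k'$, then use Lemma 3.10(a) together with Lemma 3.7(a),(b) to reach the contradiction, concluding $K(P_1)=\{i\}$ and invoking Theorem 3.9. The only deviations are cosmetic: you absorb the paper's separate easy case ($t'\in T(E_{\mathbb{P}_N\rightarrow P_2})$, where $|T(E_{\mathbb{P}_N\rightarrow P_1})|\leq n$ and the hypothesis applies directly) into one unified argument, and you make explicit the periodicity bookkeeping and the range check for Lemma 3.7(b) that the paper compresses into ``by the similar discussion in Corollary 3.11.''
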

\begin{proof}
If $t' \in T(E_{\mathbb{P}_N \rightarrow P_2})$, then $|T(E_{\mathbb{P}_N \rightarrow P_1})|\leq n$. It follows
that $P_1(x)$ is of form (1.1) by assumption. In what follows suppose that $t'\notin T(E_{\mathbb{P}_N \rightarrow
P_2})$. We have
$$S_k(P_1)=S_k(P_2)+T_{t'}(k)\sum\nolimits_{(t',d')\in G(E_{P_2\rightarrow P_1})}(\pm C_{d'}(k)).\eqno (4.1)$$ We
claim that $K(P_1)=\{i\}$. Otherwise assume that $k'=min\{k\in K(P_1)\mid k>i\}$ then we have $i\nmid k'$ and
$S_{k'}(P_1)\neq -1$. We claim that $v_2(k')=t'$. Otherwise by (4.1) we have $S_{k'}(P_1)=S_{k'}(P_2)$. Since
$i\nmid k'$, it follows that $S_{k'}(P_2)=-1$ by Theorem 3.9 and hence $S_{k'}(P_1)=-1$. It is a contradiction. So
we have $v_2(k')=t'$. Suppose that $k'=mi+j$ with $1\leq j\leq i-1$. Since $v_2(k)\geq v_2(i)$, by Lemma 3.10 (a)
we have $v_2((m+1)i+j)\neq v_2(mi+j)=v_2(k')=t'$. It means that $v_2((m+1)i+j)\notin T(E_{P_2 \rightarrow P_1})$.
By (4.1), we have $S_{(m+1)i+j}(P_1)=S_{(m+1)i+j}(P_2)$. Since $i\nmid (m+1)i+j$, it follows that
$S_{(m+1)i+j}(P_2)=-1$ by Theorem 3.9 and hence  $S_{(m+1)i+j}(P_1)=-1$. By the similar discussion in Corollary
3.11, we have $S_{k'}(P_1)= -1$. It is a contradiction. So our claim that $K(P_1)=\{i\}$ is true. By Theorem 3.9,
$P_1(x)$ is of the form (1.1) too.
\end{proof}

\begin{lem}
If for any $P_1(x)\in LC(N,i)$ there exists $P_2(x)\in LC(N,i)$ satisfying
\\\textup{(1)}\; $|T(E_{\mathbb{P}_N \rightarrow P_1})|\leq|T(E_{\mathbb{P}_N \rightarrow P_2})|+1$;
\\\textup{(2)}\; $T(E_{P_2 \rightarrow P_1})=\{t'\}\quad with\quad t'\geq v_2(i).$ \\
Then Conjecture 1.2 is true for $N$.
\end{lem}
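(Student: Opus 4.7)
The plan is to prove Conjecture 1.2 for $N$ by induction on $n := |T(E_{\mathbb{P}_N \rightarrow P})|$, using Lemma 4.1 as the inductive engine and the hypothesis of Lemma 4.2 to supply the intermediate polynomial $P_2$ at each step.

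First I would reduce to the case $P(x) \in \widetilde{LC}(N)$: for any $P(x) \in LC(N)$ with $P \neq \pm \mathbb{P}_N(\pm x)$, the discussion in Section 2 produces a unique member of $\{P(x), P(-x), -P(x), -P(-x)\}$ lying in $LC(N,i)$ for some $2 \leq i \leq N-1$, and form (1.1) is preserved under these four substitutions. The base cases are immediate: $n = 0$ forces $P = \mathbb{P}_N(x)$, which is trivially of form (1.1), while $n = 1$ is exactly Corollary 3.11.

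For the inductive step, assume that every $P \in \widetilde{LC}(N)$ with $|T(E_{\mathbb{P}_N \rightarrow P})| \leq n$ is of form (1.1), and let $P_1 \in LC(N,i)$ satisfy $|T(E_{\mathbb{P}_N \rightarrow P_1})| = n+1$. Invoking the hypothesis of Lemma 4.2 yields $P_2 \in LC(N,i)$ satisfying (1) and (2). Because condition (2) says the transformations from $P_2$ to $P_1$ all carry the single parameter $t'$, the contribution in formula (3.3) is nonzero only when $v_2(k) = t'$, so $T(E_{\mathbb{P}_N \rightarrow P_1})$ and $T(E_{\mathbb{P}_N \rightarrow P_2})$ can differ only at the value $t'$; combined with (1) this pins $|T(E_{\mathbb{P}_N \rightarrow P_2})| \leq n$. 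The induction hypothesis then supplies the first premise of Lemma 4.1 at level $n$ (every $P \in \widetilde{LC}(N)$ with $|T| \leq n$ is of form (1.1)); the second premise of Lemma 4.1 is precisely (2), with $t' \geq v_2(i)$ as required. Applying Lemma 4.1 concludes that $P_1$ is of form (1.1), which completes the induction and hence Conjecture 1.2 for $N$.

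The delicate point I expect to have to be careful about is wringing strict decrease out of condition (1): taken literally, $|T(P_1)| \leq |T(P_2)| + 1$ is a consequence of (2) alone (since the only T-value that can be altered by the transformations from $P_2$ to $P_1$ is $t'$), so it does not by itself guarantee $|T(P_2)| < |T(P_1)|$. The induction above implicitly selects the case $|T(P_1)| = |T(P_2)| + 1$, which is the case where $t' \notin T(E_{\mathbb{P}_N \rightarrow P_2})$; the other logically possible cases $|T(P_1)| = |T(P_2)|$ and $|T(P_1)| = |T(P_2)| - 1$ would need to be ruled out or absorbed, either by strengthening (1) to $|T(P_2)| + 1 \leq |T(P_1)|$ or by iterating Lemma 4.2's construction to extract an always-strictly-simpler $P_2$. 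Nailing down this monotonicity — essentially showing that the hypothesis of Lemma 4.2 can always be arranged to reduce the T-count — is where the real work lies, and is what reduces Conjecture 1.2 to an explicit constructive problem on E-transformations.
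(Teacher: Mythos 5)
Your proposal is correct and follows essentially the same route as the paper: induction on $|T(E_{\mathbb{P}_N\rightarrow P})|$, with Corollary 3.11 handling the base case $|T(E_{\mathbb{P}_N\rightarrow P})|\leq 1$, Lemma 4.1 supplying the inductive step via the hypothesized $P_2$, and (in the paper, though harmlessly omitted by you) Corollary 3.4(b) remarking that $|T(E_{\mathbb{P}_N\rightarrow P})|\leq t=v_2(N)$ so at most $t$ steps are needed. The ``delicate point'' you flag at the end is genuine, but it is a defect of the paper's statement rather than of your argument: the paper's proof simply asserts ``By Lemma 4.1, $P(x)$ is of the form (1.1) if $|T(E_{\mathbb{P}_N\rightarrow P})|\leq n+1$,'' silently reading condition (1) in the direction $|T(E_{\mathbb{P}_N\rightarrow P_2})|\leq |T(E_{\mathbb{P}_N\rightarrow P_1})|-1$ that the induction actually requires, whereas as printed condition (1) only bounds $|T(E_{\mathbb{P}_N\rightarrow P_2})|$ from below and, as you correctly observe, is already a consequence of condition (2) alone --- so your self-correction (strengthen (1) to the reversed inequality, which is what the paper implicitly uses and what its $N=12$ example instantiates with equality) is exactly the right repair.
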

\begin{proof}
We use induction on $|T(E_{\mathbb{P}_N \rightarrow P})|$. We have proved in Corollary 3.11 that Conjecture 1.2 is
true for $|T(E_{\mathbb{P}_N\rightarrow P})|\leq 1$. Then assume Conjecture 1.2 is true for
$|T(E_{\mathbb{P}_N\rightarrow P})|\leq n$, i.e. $P(x)\in \widetilde{LC}(N)$ is of the form (1.1) if
$|T(E_{\mathbb{P}_N \rightarrow P})|\leq n$. By Lemma 4.1, $P(x)\in \widetilde{LC}(N)$ is of the form (1.1) if
$|T(E_{\mathbb{P}_N \rightarrow P})|\leq n+1$. That is, Conjecture 1.2 is also true for $|T(E_{\mathbb{P}_N
\rightarrow P})|\leq n+1$. It follows from Corollary 3.4 (b) that $|T(E_{\mathbb{P}_N \rightarrow P})|\leq
t=v_2(N)$. So by at most $t$ inductions on $|T(E_{\mathbb{P}_N \rightarrow P})|$, we can prove that Conjecture 1.2
is true for $|T(E_{\mathbb{P}_N \rightarrow P})|\leq t$, namely Conjecture 1.2 is true for $N$.
\end{proof}
\newpage
Take example for $N=2\times2\times3$. By computation, we find all eight polynomials in $\widetilde{LC}(N)$ as
follows:
\begin{eqnarray*}
P_1(x) & = & 1+x+x^2+x^3+x^4+x^5+x^6+x^7+x^8+x^9+x^{10}+x^{11}\\
       & = & \Phi_2(x)\Phi_4(x)\Phi_3(x)\Phi_6(x)\Phi_{12}(x);\\
P_2(x) & = & 1+x+x^2+x^3-x^4-x^5-x^6-x^7+x^8+x^9+x^{10}+x^{11}\\
       & = & \Phi_2(x)\Phi_4(x)\Phi_{24}(x);\\
P_3(x) & = & 1+x-x^2-x^3+x^4+x^5+x^6+x^7-x^8-x^9+x^{10}+x^{11}\\
       & = & \Phi_2(x)\Phi_4(x)\Phi_{12}^2(x);\\
P_4(x) & = & 1+x-x^2-x^3+x^4+x^5-x^6-x^7+x^8+x^9-x^{10}-x^{11}\\
       & = & -\Phi_1(x)\Phi_2^2(x)\Phi_3(x)\Phi_6(x)\Phi_{12}(x);\\
P_5(x) & = & 1+x-x^2-x^3-x^4-x^5+x^6+x^7+x^8+x^9-x^{10}-x^{11}\\
       & = & -\Phi_1(x)\Phi_2^2(x)\Phi_{24}(x);\\
P_6(x) & = & 1+x+x^2+x^3+x^4+x^5-x^6-x^7-x^8-x^9-x^{10}-x^{11}\\
       & = & -\Phi_1(x)\Phi_2^2(x)\Phi_3^2(x)\Phi_6^2(x);\\
P_7(x) & = & 1+x+x^2-x^3-x^4-x^5+x^6+x^7+x^8-x^9-x^{10}-x^{11}\\
       & = & -\Phi_1(x)\Phi_4(x)\Phi_3^2(x)\Phi_{12}(x);\\
P_8(x) & = & 1+x+x^2-x^3-x^4-x^5-x^6-x^7-x^8+x^9+x^{10}+x^{11}\\
       & = & \Phi_1^2(x)\Phi_2(x)\Phi_3^3(x)\Phi_6(x).
\end{eqnarray*}
The $E-$transformations of $P_i(x)$ and their $T(E)$'s are listed as follows:\\\\
\begin{tabular}{|c|c|c|l|c|}
\hline$P(x)$&  $i$& $(e(1),e(2),e(4))$ & $(e(3),e(6),e(12),e(24))$ & $T(E)$
\\\hline $P_1$& & & &$\varnothing$ \\\hline $P_2$ & 4& & $(1,1,1,0)\rightarrow(0,0,0,1)$&2\\\hline
$P_3$ & 2& &$(1,1,1,0)\rightarrow(0,0,2,0)$ &1\\\hline $P_4$ & 2&$(0,1,1)\rightarrow(1,2,0)$ & &1\\\hline $P_5$ &
2&$(0,1,1)\rightarrow(1,2,0)$ &$(1,1,1,0)\rightarrow(0,0,0,1)$ &1,2\\\hline$P_6$ &6&$(0,1,1)\rightarrow(1,2,0)$
&$(1,1,1,0)\rightarrow(2,2,0,0)$ &1\\\hline$P_7$ & 3&$(0,1,1)\rightarrow(1,0,1)$ &$(1,1,1,0)\rightarrow(2,0,1,0)$
&0\\\hline$P_8$ & 3&$(0,1,1)\rightarrow(2,1,0)$ &$(1,1,1,0)\rightarrow(2,0,1,0)\rightarrow(3,1,0,0)$ &0,1\\\hline
\end{tabular}
\\\\\\
By Corollary 3.11, $P_1,P_2,P_3,P_4,P_6,P_7$ are of the form (1.1). Since $T(E_{P_4 \rightarrow P_5})={2}$ and
$T(E_{P_7 \rightarrow P_8})={1}$, $P_5,P_8$ are also of the form (1.1) by Lemma 4.1. Thus Conjecture 1.2 is true
for $N=12$ by Lemma 4.2.\par

The difficulty is how to remove the "if" in Lemma 4.2. In other words we raise the following conjecture:
\begin{con}
For any $P_1(x)\in LC(N,i)$ there exists $P_2(x)\in LC(N,i)$ satisfying
\\\textup{(1)}\; $|T(E_{\mathbb{P}_N \rightarrow P_1})|\leq|T(E_{\mathbb{P}_N \rightarrow P_2})|+1$;
\\\textup{(2)}\; $T(E_{P_2 \rightarrow P_1})=\{t'\}\quad with\quad t'\geq v_2(i).$
\end{con}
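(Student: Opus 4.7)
\emph{Proof plan.} A natural candidate is $t^*:=\max T(E_{\mathbb{P}_N\rightarrow P_1})$. Since $S_i(P_1)=2i-1\neq -1$, Corollary 3.4 (a) puts $v_2(i)\in T(E_{\mathbb{P}_N\rightarrow P_1})$, whence $t^*\geq v_2(i)$ and condition (2) will hold for whatever $P_2$ we build. The aim is then to produce $P_2\in LC(N,i)$ by reversing exactly the level-$t^*$ $E$-transformations of some fixed chain $\mathbb{P}_N\rightarrow P_1$.

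I would fix such a chain and, using the independence of the $d$-families across distinct odd $d\mid M$, reorder it so that all non-level-$t^*$ moves precede all level-$t^*$ moves; $P_2$ is then the intermediate polynomial at the cut. Assuming the reordering preserves non-negativity of the intermediate multiplicities --- reducible to a finite case analysis inside each $d$-family via the algorithm in the proof of Lemma 3.2 --- one obtains $P_2\in OC(N)$ with $T(E_{P_2\rightarrow P_1})\subseteq\{t^*\}$ and $T(E_{\mathbb{P}_N\rightarrow P_2})=T(E_{\mathbb{P}_N\rightarrow P_1})\setminus\{t^*\}$, so conditions (1) and (2) are secured.

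The first delicate point is to show that this $P_2$ begins with the required initial string $1,1,\ldots,1,-1$. By (3.3) the sums $S_k(P_2)$ and $S_k(P_1)$ agree whenever $v_2(k)\neq t^*$, so Newton's recursion (2.1) already forces the correct initial coefficients --- except in the sub-case $t^*=v_2(i)$ with $i>2^{v_2(i)}$, where some index $k<i$ satisfies $v_2(k)=t^*$ and the splitting of level-$t^*$ moves into pre- and post-$P_2$ parts must be chosen to cancel their contribution on those indices. I expect this splitting problem to be tractable by a counting or pigeonhole argument over the $2^m$ possible partitions of the $m$ level-$t^*$ moves.

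The deeper --- and, in my view, principal --- obstacle is that membership in $LC(N,i)$ is strictly stronger than membership in $OC(N)$: the $E$-transformation framework and Lemma 3.1 control only odd-coefficient cyclotomic polynomials, whereas $\pm 1$-coefficient ones form a proper subclass with no known purely cyclotomic characterisation. A plausible route is an induction on the number of coefficients of $P_2$ with absolute value exceeding $1$, at each step identifying a further level-$t^*$ correction move that repairs an offending coefficient without spoiling previous constraints; alternatively one may allow $t^*$ to range over all elements of $T(E_{\mathbb{P}_N\rightarrow P_1})$ lying at or above $v_2(i)$ and apply a pigeonhole argument to the resulting finite family of candidates. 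Either way, a finer structural understanding of how the $\pm 1$-sequences sit inside the multiplicity lattice of Lemma 3.1 appears to be the missing ingredient, and it is the natural next target if one wishes to settle the conjecture.
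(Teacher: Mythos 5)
You should note first that the paper does not prove this statement at all: it is Conjecture 4.3, raised precisely because the authors could not ``remove the `if''' in Lemma 4.2, so there is no proof of record to match your plan against --- any complete argument here would be new mathematics. Judged on its own terms, your proposal is a plan with genuine gaps rather than a proof, and you have in fact located the fatal one yourself: the entire $E$-transformation calculus (Lemma 3.1, the algorithm of Lemma 3.2, formula (3.3)) operates on $OC(N)$, so reversing the level-$t^*$ moves of a chain $\mathbb{P}_N\rightarrow P_1$ can only be guaranteed to land in $OC(N)$. Nothing in the framework forces the intermediate polynomial $P_2$ to have coefficients $\pm1$, let alone the initial string $1,\ldots,1,-1$; the conjecture's whole content is the existence of a cut point that stays inside $LC(N,i)$, and your two repair strategies (induction on the number of coefficients of modulus $>1$, or pigeonhole over the $2^m$ partitions of the level-$t^*$ moves) are stated as hopes, with no mechanism identified that would make an offending coefficient repairable by a further level-$t^*$ move. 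That is exactly the missing ingredient, and acknowledging it does not supply it.

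Two secondary points would also need work even granting the main gap. First, your reordering step is not free: within a single $d$-family the non-level-$t^*$ and level-$t^*$ moves need not commute while keeping all intermediate exponents non-negative (a level-$t^*$ move may be what creates the room for a later move), so ``$P_2$ is the intermediate polynomial at the cut'' requires constructing a fresh chain, not merely permuting the given one. Second, your scoping of the delicate case for the initial string is too narrow: by (3.3) the sums $S_k(P_2)$ and $S_k(P_1)$ can differ at every $k$ with $v_2(k)=t^*$, and such $k<i$ exist whenever $2^{t^*}<i$, not only when $t^*=v_2(i)$ --- e.g.\ $i=6$, $t^*=2$ gives the offending index $k=4$. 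Moreover when $v_2(i)=t^*$ you also need the post-cut moves to contribute zero at $k=i$ itself, since both $P_1$ and $P_2$ must satisfy $S_i=2i-1$; this is an additional linear constraint on the splitting that your counting argument does not address. What your plan does get right, and cheaply, is condition (2) modulo existence: $v_2(i)\in T(E_{\mathbb{P}_N\rightarrow P_1})$ by Corollary 3.4(a) since $S_i(P_1)=2i-1\neq-1$, so $t^*=\max T(E_{\mathbb{P}_N\rightarrow P_1})\geq v_2(i)$, and condition (1) follows from $T(E_{\mathbb{P}_N\rightarrow P_2})\subseteq T(E_{\mathbb{P}_N\rightarrow P_1})\setminus\{t^*\}$. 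But these are the easy clauses; the hard clause, $P_2\in LC(N,i)$, is where the conjecture lives, and it remains open both in the paper and in your proposal.
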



\end{document}